\definecolor{darkblue}{rgb}{0,0,0.7}
\definecolor{darkgreen}{rgb}{0.01,0.75,0.24}
\def \Ee[#1]{\mathcal{E}^{\text{{#1}}}}
\def\R{\mathbf{R}}
\def\pa[#1,#2]{\frac{\partial {#1}}{\partial {#2}} }
\def\idom[#1,#2,#3]{\int_{#1}\hspace{1pt} {#2} \hspace{1pt} \text{d}{#3}}
\def\res[#1,#2]{\left.{#1}\right|_{#2}}
\def\gt{\rightarrow}
\def\lgt{\downarrow}
\def\var[#1,#2]{\langle \delta \mathcal{E}^{\text{{#1}}}({#2}),v\rangle}
\def\vars[#1,#2,#3]{\langle \delta^2\mathcal{E}^{\text{{#1}}}({#2})v,{#3}\rangle}
\def\vard[#1,#2,#3,#4]{\langle \delta\mathcal{E}^{\text{{#1}}}({#2})-\delta\mathcal{E}^{\text{{#3}}}({#4}),v\rangle}
\def\F{\mathcal{F}}
\def\P{\mathscr{P}}
\def\sX{\mathscr{X}}
\def\sY{\mathscr{Y}}
\def\E{\mathbb{E}}
\def\N{\mathbb{N}}
\newcommand{\balpha}{\boldsymbol{\alpha}}
\newcommand{\bbeta}{\boldsymbol{\beta}}
\newcommand{\W}{\mathcal{W}}
\newcommand{\HH}{\mathcal{H}}
\newcommand{\eps}{\varepsilon}
\newcommand{\wgt}{\rightharpoonup}
\newcommand{\bx}{\mathbf{x}}
\newcommand{\KL}{\mathrm{KL}}
\newcommand{\norm}[1]{\lVert #1 \rVert}
\newcommand{\commentout}[1]{}
\newcommand{\be}{\begin{equation}}
\newcommand{\en}{\end{equation}}
\newcommand{\ben}{\begin{equation*}}
\newcommand{\enn}{\end{equation*}}
\newcommand{\bea}{\begin{aligned}}
\newcommand{\ena}{\end{aligned}}
\def\ba#1\ena{\begin{align}#1\end{align}}
\def\ban#1\enan{\begin{align*}#1\end{align*}}
\newtheorem{assumption}[theorem]{Assumption}
\crefname{hypothesis}{Hypothesis}{Hypotheses}
\Crefname{ALC@unique}{Line}{Lines}
\colorlet{texcscolor}{blue!50!black}
\colorlet{texemcolor}{red!70!black}
\colorlet{texpreamble}{red!70!black}
\colorlet{codebackground}{black!25!white!25}
\lstdefinestyle{siamlatex}{%
  style=tcblatex,
  texcsstyle=*\color{texcscolor},
  texcsstyle=[2]\color{texemcolor},
  keywordstyle=[2]\color{texemcolor},
  moretexcs={cref,Cref,maketitle,mathcal,text,headers,email,url},
}
\DeclareTotalTCBox{\code}{ v O{} }
{ 
  fontupper=\ttfamily\color{black},
  nobeforeafter,
  tcbox raise base,
  colback=codebackground,colframe=white,
  top=0pt,bottom=0pt,left=0mm,right=0mm,
  leftrule=0pt,rightrule=0pt,toprule=0mm,bottomrule=0mm,
  boxsep=0.5mm,
  #2}{#1}
\patchcmd\newpage{\vfil}{}{}{}
\title{Scaling limit of the Stein variational gradient descent: the mean field regime
\thanks{Submitted to the editors DATE.
\funding{This work was supported in part by the National Science Foundation through grants DMS-1454939 (JL) and DMS-1351653 (JN).}}}
\author{Jianfeng Lu\footnotemark[2]
\and Yulong Lu\footnotemark[2]
\and James Nolen\thanks{Department of Mathematics, Duke University, Durham NC 27708, USA (\email{jianfeng@math.duke.edu}, \email{yulonglu@math.duke.edu},  \email{nolen@math.duke.edu}).}}
\begin{document}
\maketitle

\begin{tcbverbatimwrite}{tmp_\jobname_abstract.tex}
\begin{abstract}
We study an interacting particle system 
in $\mathbf{R}^d$ 
motivated by Stein variational gradient descent
[Q. Liu and D. Wang, NIPS 2016], a deterministic algorithm for 
approximating 
a given probability density with unknown normalization based on particles.
We prove that in the large particle limit
the empirical measure of the particle system converges to a solution of a non-local and nonlinear PDE. 
We also prove global existence, uniqueness and regularity of the solution to the limiting PDE. 
Finally, we prove that the solution to the PDE converges
to the unique invariant solution in long time limit. 
\end{abstract}

\begin{keywords}
  Stein variational gradient descent; Interacting particle system; Mean field limit; Sampling
\end{keywords}

\begin{AMS}
  35Q62,  35Q68, 82C22
\end{AMS}
\end{tcbverbatimwrite}
\input{tmp_\jobname_abstract.tex}

\section{Introduction}

In this paper we study the following interacting particle system in $\R^d$:
\be\label{eq:ps}
\bea
\dot{x}_i(t)& = -\frac{1}{N} \sum_{j=1}^N \nabla K(x_i(t) - x_j(t)) - \frac{1}{N} \sum_{j=1}^N K(x_i(t) - x_j(t))\nabla V(x_j(t)), \\
x_i(0) & = x_i^{0} \in \R^d, \quad \quad i=1,\cdots, N.
\ena
\en
We refer to each of the $N$ functions $x_i(\cdot) \in \R^d$ as a particle. The function $K:\R^d \mapsto \R$ is a smooth, symmetric, and positive definite kernel. The function $V:\R^d \to \R$  is a smooth potential such that $e^{-V(x)}$ is integrable. More specific assumptions about $K$ and $V$ are given below.  

We are interested in the macroscopic behavior of the particle
system \eqref{eq:ps} as $N \gt \infty$ in the framework of  mean field limit. Formally
this mean field limit   
 is described by the following non-local, nonlinear partial differential equation (PDE):
\be\label{eq:mfl}
\bea
& \partial_t \rho  = \nabla \cdot(\rho(K \ast (\nabla  \rho + \nabla V \rho))),\\
&\rho(0, \cdot) = \rho_0(\cdot).
\ena\en
We aim to make a rigorous connection between \eqref{eq:ps} and \eqref{eq:mfl}. 
Specifically, we prove global existence and 
uniqueness of a solution to this initial value problem, 
for $\rho_0$ in the appropriate regularity class, and 
we show that the empirical measure
\[
\mu^N_t = \frac{1}{N} \sum_{i=1}^N \delta_{x_i(t)}
\]
converges as $N \to \infty$ to the solution of \eqref{eq:mfl},
assuming $\mu^N_0$ converges to $\rho_0(x)dx$ in the appropriate sense. 
We also want to study the long-time behavior of solutions 
to the mean field PDE \eqref{eq:mfl}.
It is easy to see that the probability density 
$$
\rho_\infty(x) = e^{-V(x)}/Z
$$
with  $Z = \int e^{-V(x)}dx$ is an invariant solution to \eqref{eq:mfl}. 
Under certain assumptions, we prove that
$\rho(t,\cdot)$ converges weakly to $ \rho_\infty$ as $t \to +\infty$.

\subsection{Motivation} Our interest in the particle system \eqref{eq:ps} is mainly motivated by the recent works by Liu and Wang \cite{LiuWang16, Liu17}, where a time-discretized form of \eqref{eq:ps} was introduced as an algorithm called Stein Variational Gradient Descent (SVGD).  The idea of the algorithm is to transport a set of $N$ particles in $\R^d$ so that their empirical measure $\mu^N$ approximates the target probability measure $\rho_\infty(x)dx = Z^{-1}e^{-V(x)}dx$, with an unknown  normalization factor $Z$.  At discrete times, the particles are updated via the map 
\be\label{eq:map}
x \mapsto T(x)  = x + \eps \varphi(x)
\en
where  $\eps$ is a small time step size and $\varphi$ is a velocity field, which is chosen appropriately  so to have a ``fastest decay'' of the Kullback-Leibler (KL)
divergence between the push-forward measure $ T_\# \mu^N$ and the target $\rho_\infty$. 
Recall that the KL-divergence (or relative entropy) $\KL(\mu || \nu)$ between probability measures $\mu$ and $\nu$ is
$$
\KL(\mu || \nu) =  \int \log\Big(\frac{d\mu}{d\nu}\Big)\frac{d\mu}{d\nu}d\nu
$$
if $\mu$ is absolutely continuous with respect to $\nu$, and we set $\KL(\mu || \nu) = +\infty$ if $\mu$ is singular to $\nu$.
This idea of SVGD can be  formalized as choosing the velocity field $\varphi$ to solve the variational problem
\be\label{eq:varp0}
\sup_{\varphi \in \HH} \Big\{-\partial_\eps \KL(T_\# \mu^N \,||\,\rho_\infty)  |_{\eps = 0} \;\;| \;\;\|\varphi\|_\HH  \leq 1\Big\}
\en
at each time step, where $\HH$ is a suitable space of vector fields. 
It is not clear that \eqref{eq:varp0} is well-defined,
because the measure $T_\# \mu^N$ may be singular with respect to
$\rho_\infty$ and $\KL(T_\# \mu^N \,||\, \rho_\infty) = +\infty$.  
However, as shown in \cite{LiuWang16}, \eqref{eq:varp0}
can be given meaning through the observation that if $\mu$ is absolutely continuous with respect to $\rho$ and $\KL(T_\# \mu \,||\, \rho) < \infty$, then 
\[
-\partial_\eps \KL(T_\# \mu \,||\, \rho) |_{\eps = 0} = \E_{\mu} [S_\rho \varphi],
\]
where $S_\rho$ is the so-called Stein operator defined by
\[
S_\rho\varphi := \nabla \log \rho(x) \cdot \varphi(x) + \nabla \cdot \varphi(x).
\]
In view of \eqref{eq:varp0}, this leads to the definition of Stein discrepancy 
\be\label{eq:varp1}
\mathrm{SD}(\mu, \rho, \HH) := \sup_{\varphi\in \HH}\Big\{\E_{\mu} [S_{\rho}\varphi] \ \Big|\  \|\varphi\|_\HH \leq 1\Big\},
\en
which has the property that
$\mathrm{SD}(\mu, \rho, \HH) \geq 0$ is equal to zero 
if and only if $\mu = \rho$ provided that the space $\HH$ is sufficiently rich.
For the empirical measure
$\mu^N$, the objective function $\E_{\mu^N} [S_{\rho_\infty} \varphi]$ in \eqref{eq:varp1} may be well-defined and finite even though $\KL(T_\# \mu^N \,||\, \rho_\infty) = +\infty$.  Furthermore, \cite{LiuWang16} showed that if the space $\HH$ is chosen to be a reproducing kernel Hilbert space with a positive definite kernel $K$, then the velocity field optimizing \eqref{eq:varp1} can be
characterized explicitly and is given by 
\[
\varphi^\ast_{\mu, \rho} (\cdot) \propto \E_{x\sim \mu}[ S_\rho K(x,\cdot)] = \int_{\R^d} \left( \nabla \log \rho(x) K(x,\cdot)  + \nabla_x K(x,\cdot) \right)\mu(dx).
\]
Therefore, interpreting \eqref{eq:varp0} by \eqref{eq:varp1} and using 
the fact that $\rho_\infty(x) \propto  e^{-V(x)}$, one sees that
the optimal solution of \eqref{eq:varp0} is given by 
\be\label{eq:varp2}\bea
\varphi^\ast_{\mu^N, \rho_\infty}(x) & = \E_{x\sim \mu^N}[ S_{\rho_\infty} K(x,\cdot)]\\
& =  -\frac{1}{N} \sum_{j=1}^N \nabla K(x - x_j) - \frac{1}{N} \sum_{j=1}^N K(x - x_j)\nabla V(x_j).
\ena
\en
Putting this optimal velocity back into \eqref{eq:map} and letting the step size
$\eps \lgt 0$ gives the evolution \eqref{eq:ps}.

The variational picture described above about the particle system 
\eqref{eq:ps} suggests that the mean field limit \eqref{eq:mfl} might also
admit a variational structure. Indeed, 
it has been shown heuristically in \cite{Liu17} that 
equation \eqref{eq:mfl} can be viewed 
formally as a gradient flow for the KL-divergence functional  
\[
\rho \mapsto  \KL(\rho \,||\, \rho_\infty) = \int_{\R^d} \rho \log \frac{\rho}{\rho_\infty} \,dx,
\] 
with respect to a generalized optimal transport metric
whose definition involves the 
reproducing kernel Hilbert space with kernel $K(x)$. 
This in particular implies that the KL-divergence functional is a  Lyapunov functional for the PDE \eqref{eq:mfl}, namely 
$$
\frac{d}{dt} \KL(\rho(t,\cdot) \,||\, \rho_\infty) \leq 0.
$$
 Interpreting an evolutionary PDE as a gradient flow in the space
  of probability measures with respect to certain Wasserstein metric
  dates back to the seminar work on Fokker-Planck equation by Jordan,
  Kinderlehrer and Otto \cite{JKO}.  By now, similar gradient flow
  structures have been identified for a large family of evolution
  equations, including porous medium equation \cite{Otto01_porous},
  McKean-Vlasov equation \cite{carrillo2003kinetic}, etc.  In the
  present paper, we will not pursue further the rigorous definition
  and analysis of the gradient flow structure of
  \eqref{eq:mfl}. Instead, we take the system \eqref{eq:ps} as our
  starting point and prove its connection to the mean field PDE
  \eqref{eq:mfl}.  

\subsection{Relevant Literature}
Sampling from a density of the form $\rho_\infty(x) = e^{-V(x)}/Z$ without knowing the 
normalization constant $Z$ is a fundamental problem in Bayesian statistics and machine learning.
One generic approach that has been 
tremendously successful in recent years is the Markov chain Monte Carlo (MCMC)
methodology based on Metropolis-Hastings mechanism. 
The general principle of Metropolis-Hastings algorithms is 
to build an ergodic Markov chain 
whose invariant measure is the target measure $\rho_\infty$ by first 
making candidate samples (proposals), which are then tuned to ensure stationarity 
via acception/rejection. In practice, one common approach to
constructing proposals is by discretizing some stochastic dynamics, 
such as the following (overdamped) Langevin dynamics: 
\begin{align}
dX(t) = - \nabla V(X) \,dt + \sqrt{2}\, dB(t) \label{ito1},
\end{align}
where $B$ is a standard Brownian motion in $\R^d$. A vanilla Euler-Maruyama discretization scheme associated to \eqref{ito1} together with Metropolis-Hastings
step leads to the famous {\em Metropolis Adjusted Langevin Algorithm}
(MALA) \cite{roberts1996exponential, bou2012nonasymptotic} (whose
non-Metropolized version known as {\em unadjusted Langevin algorithm}
(ULA) \cite{dalalyan2017theoretical, durmus2017nonasymptotic}).
 
%
One advantageous feature of stochastic dynamics-based sampling
methods, e.g. MALA or ULA, is that the dynamics tend to explore high
probability regions (around the local minima of $V$), while the random
noise helps the dynamics to escape outside the basin of attraction and
thus promotes its exploration of the entire state space.  In contrast
to this stochastic sampling approach, \eqref{eq:ps} may be viewed as a
deterministic (albeit coupled) particle system for approximating $\rho_\infty$. Qualitatively speaking, the terms in \eqref{eq:ps} which
involve $\nabla V$ tend to drive particles toward local minima of $V$
(note however the nonlocal interaction due to the presence of $K$).
On the other hand, the terms involving $\nabla K$ are repulsive,
forcing the particles to disperse; this is seen in the fact that
\[
-\frac{1}{N} \sum_{j=1}^N \nabla K(x_i - x_j) = - \nabla_{x_i} E({\bf x}),
\]
where $E({\bf x}) = \frac{1}{N} \sum_{i < j} K(x_i - x_j)$ is the
interaction energy.  Here we assumed that $\nabla K(0) = 0$. This
interaction term in SVGD plays a role similar to that of the diffusion
term in stochastic-dynamics-based sampling methods.  Intuitively, one
would expect that the empirical measure $\mu^N_t$ of the particles
$\{x_i(t)\}$ tends to be close to $\rho_\infty$ in the limit of both
large sample size and long time.  One of the contributions of this
paper is to prove this convergence rigorously.

To compare these two sampling approaches at the PDE level, observe that the probability density for $X(t)$ defined by \eqref{ito1} solves the linear Fokker-Planck equation
\begin{align} \label{FP1}
\partial_t \rho = \nabla \cdot (\nabla \rho + \rho \nabla V). 
\end{align}
It is well known \cite{markowich2000trend} that under some mild assumption on $V$, the solution $\rho$
of \eqref{FP1} converges to the equilibrium distribution $\rho_\infty$ exponentially fast. 
On the other hand, if we formally set $K(x) = \delta_0(x)$, the non-local mean-field equation \eqref{eq:mfl} becomes\begin{align}  \label{Kdeltalimit}
\partial_t \rho = \nabla \cdot \left(\rho (\nabla \rho + \rho \nabla V) \right),
\end{align}
which is a non-linear porous medium equation with an additional
transport due to $\nabla V$. So, compared to \eqref{FP1}, the mobility
term and the transport term in \eqref{Kdeltalimit} are small where the
density is small. This suggests that the convergence of the solution
of \eqref{Kdeltalimit} towards $\rho_\infty$ may be slower than that
of \eqref{FP1}.  In this paper, we consider only a fixed kernel $K$,
but if we scale the kernel $K$ as
$K_N(\cdot) = N^\beta K (N^\beta \cdot)$, it is natural to expect the
large particle limit of \eqref{eq:ps} to be governed by
\eqref{Kdeltalimit} instead of \eqref{eq:mfl}, if $\beta > 0$ is not
too large --- rigorous justification of such convergence result is
still work in process.

 One should also compare \eqref{eq:ps} with the following more standard deterministic
 interacting particle system:
\begin{equation}\label{eq:ps2}
  \dot{x_i} = - \frac{1}{N} \sum_{j=1}^N \nabla K(x_i - x_j) - \nabla V(x_i), \qquad i = 1,2,\cdots, N.
\end{equation}
It is well-known \cite{dobrushin1979vlasov} that under suitable assumption
on $K$ and $V$, the mean field limit of \eqref{eq:ps2} is the following McKean-Vlasov equation
\be\label{eq:mv}
\partial_t \rho  = \nabla \cdot(\rho(\nabla( K \ast \rho + V))).
\en
The particle system \eqref{eq:ps} differs from
\eqref{eq:ps2}
in that the external force
added to each particle is non-local, and is defined
by averaging the individual forces $\nabla V(x_j)$ with 
weights defined by the kernel $K$. Interestingly, 
such non-local external force 
guarantees that $\rho_\infty$ is a stationary solution of \eqref{eq:mfl} ---
this explains the rationale for using the deterministic particle system 
\eqref{eq:ps} as an approximation algorithm for sampling $\rho_\infty$. 
On the contrary, $\rho_\infty$ is not a stationary solution of \eqref{eq:mv}. In fact, if $V$ or $K$ is non-convex, the equation
\eqref{eq:mv} may have multiple stationary solutions; see e.g. 
\cite{burger2008large, burger2013stationary}. We also remark that the nonlocal external force makes
the analysis of \eqref{eq:ps} more challenging than that of \eqref{eq:ps2}.

Although sampling via a deterministic particle system is less common,
the use of deterministic particles is ubiquitous in
numerical approximations of partial differential 
equations arising in physics and biology. For example, the point vortex method 
have been proved successful for solving equations in fluid mechanics 
\cite{goodman1990convergence, raviart1985analysis}, and similarly the weighted particle method
\cite{degond1989weighted} and the diffusion-velocity method \cite{degond1990deterministic} for
convection-diffusion and nonlinear-wave equations \cite{chertock2001particle}. 
For a comprehensive discussion on deterministic particle
methods
we refer the reader to the recent review paper \cite{chertock2017practical}
and references therein. Recently,
a blob method
was proposed in \cite{craig2016blob} 
for an aggregation equation,
which is the equation \eqref{eq:mfl} with $V = 0$ and with $K$ being attractive rather than repulsive. One typical aggregation equation
is the so-called Keller-Segel equation \cite{keller1970initiation, horstmann20031070}. 
The same blob method was generalized by \cite{CarilloCraigPatacchini17} to a
more general class of nonlinear diffusion equations, which
has a $L^2$-Wasserstein gradient flow structure. A key feature of the blob method considered there is that the particle system 
preserves a similar gradient flow structure as the diffusion equation, which facilitates the proof of large particle limits. On the contrary, the SVGD dynamics 
\eqref{eq:ps} is not a gradient flow. This again makes the analysis of the mean field limit non-trivial. 

\subsection{Plan of The Paper} The rest of the paper is organized as follows. In \cref{sec:pre}, 
we first make several technical assumptions on $V$ and $K$
 and then state our main results under these assumptions. 
  In \cref{sec:conv}, we prove the existence and uniqueness of weak solutions to the mean field equation \cref{eq:mfl} as well as the ODE system \cref{eq:ps} of SVGD by use of the mean field characteristic flow. Some useful estimates on the solution  \cref{eq:ps} are also derived. 
   \cref{sec:wellpose} concerns the regularity of the solution to the mean field equation \eqref{eq:mfl} under additional regularity assumption on $K$.
   \cref{sec:cps} devotes to the proof of the passage from the particles system \cref{eq:ps} to its mean field PDE 
  \cref{eq:mfl}. 
Finally, in \cref{sec:asymp}
 we prove that the solution 
 $\rho$ of \eqref{eq:mfl} converges to the equilibrium $\rho_\infty$ as $t \gt \infty$.

\section{Preliminaries and Main Results}\label{sec:pre}
\subsection{Assumptions and Notation}

Throughout the paper we assume that the kernel $K$ satisfies the following:

\begin{assumption}\label{ass:k}
 $K:\R^d\mapsto \R$ is at least $C^4$ with bounded derivatives.  In addition, $K(x-y)$ is symmetric and positive definite, meaning that
\[
 \sum_{i=1}^m\sum_{j=1}^m K(x_i- x_j)\xi_i\xi_j \geq 0, \quad \quad \forall \;\; x_i\in \R^d,\;\xi_i\in \R, \;\; m \in \mathbb{N}.
\]

\end{assumption}
A canonical choice of $K$ satisfying Assumption \ref{ass:k}
is a Gaussian kernel, e.g. $K(x) = \frac{1}{(4\pi)^{d/2}} \text{exp}(-\frac{|x|^2}{4})$. 
Higher regularity of $K$ will be needed to obtain higher regularity of the solution of the mean field PDE; see \cref{prop:inteq}. For the long time convergence of the solution, we will need further assumption on $K$; see \cref{thm:asymp}.

For the potential function $V: \R^d \mapsto \R$, we will assume the following:

\begin{assumption}\label{ass:v}

\begin{itemize}

 \item[(A1)]  $V\in C^\infty(\R^d), V\geq 0$ and $V(x) \gt +\infty$ if $|x| \gt +\infty$.
 
 \item[(A2)] There exists a constant $C_V > 0$ and some index $q > 1$ such that 
 $$
 |\nabla V(x)|^{q} \leq C_V(1 + V(x)) \text{ for every } x \in \R^d
 $$
 and that 
 \be\label{eq:v2}
\sup_{\theta \in [0,1]} |   \nabla^2 V(\theta x + (1-\theta)y)|^{q} \leq C_V(1 + V(x) + V(y)).
 \en
 
 \item[(A3)] For any $\alpha, \beta > 0$,  there exists a constant $C_{\alpha, \beta}>0$ such that if $|y| \leq \alpha |x| + \beta$, then
 $$
 (1 + |x|)(|\nabla V(y)| + |\nabla^2 V(y)|) \leq C_{\alpha, \beta} (1 + V(x)).
 $$

\end{itemize}

\end{assumption}

\begin{remark}
 We comment that  \cref{ass:v} (A1)-(A3) will be used in the proofs of the existence, uniqueness
 and regularity of the solution of mean field equation. Note that by setting $\alpha = 1, \beta  = 0$ and $y = x$ in (A3), we have that 
\be\label{eq:asv3}
 (1 + |x|)(|\nabla V(x)| + |\nabla^2 V(x)|)  \leq C_{1} (1 + V(x))
\en
for some constant $C_1 > 0$. 
 These assumptions are by no means sharp, but proves to be sufficient
 for the validity of our theorems.  \cref{ass:v} (A2) implies that 
there is $C_0$ such that 
\begin{equation}
V(x) \leq C_0(1 + |x|^{q^\ast}) \quad \forall \;\; x \in \R^d \label{vpgrowth}
\end{equation}
where $ q^*  = \frac{q}{q-1}$. Indeed, this follows from
\[
\frac{d}{dt} \left( 1 + V(t \hat n)\right)^{\frac{q - 1}{q}} = (\frac{q - 1}{q}) \frac{ \hat n \cdot \nabla V(t \hat n)}{ (1 + V(t \hat n))^{1/q}} \leq (1 - \frac{1}{q}) C_V^{1/q}.
\]
where $\hat n = x/|x|$, and then integrating from $t = 0$ to $t = |x|$.  It is also easy to check that 
 \cref{ass:v} is fulfilled by even polynomials up to order $q^*$. 
\end{remark}

We use $\P_V$ and $\P_p$ denote the set of Borel probability measures $\mu$ on $\R^d$ satisfying
\begin{align} \label{Pmeasspace}
\| \mu \|_{\P_V} = \int_{\R^d} (1 + V(x)) \,d\mu < \infty \quad \quad \text{or} \quad \quad \| \mu \|_{\P_p} = \int_{\R^d} |x|^p \,d\mu(x) < \infty,
\end{align}
respectively. Thanks to \cref{vpgrowth}, we have $\P_p \subset \P_V$ for any $p \geq q^\ast = \frac{q}{q-1}$. For $\mu, \nu \in \P_p$, $\W_{p}(\mu, \nu)$ denotes the $p$-Wasserstein distance \cite{Vil03}.  Given a probability measure $\mu$ and a Borel-measurable map $f$, we denote by $f_\# \mu$ the push-forward of the measure $\mu$ under the map $f$. In places where $\rho$ is time-dependent, we often use notation $\rho_t = \rho(t,\cdot)$ to emphasize this time dependence in a succinct way; on the other hand, differentiation with respect to the variable $t$ will always be denoted by $\partial_t \rho$.

For $k,p\geq 1$, we denote by $W^{k, p}(\R^d)$ the usual Sobolev space of functions whose weak derivatives up to $k$-th order belong to $L^p(\R^d)$. When $p=2$, we write $H^k(\R^d) = W^{k,2}(\R^d)$. For our result on regularity of solutions to the PDE \eqref{eq:mfl}, we introduce function spaces
$$
\bea
& L^1_{V}:= \{u\in L^1(\R^d)\ |\  \int_{\R^d} (1 + V(x))|u(x)|dx < \infty \}, \\
& W^{1,1}_V := \{u\in W^{1,1}(\R^d)\ |\  \int_{\R^d} (1 + V(x))(|u(x)| + |\nabla u(x)|)dx < \infty \}
\ena
$$
with norms $\|u\|_{L^1_{V}} := \|(1 + V) u\|_{L^1(\R^d)}$ and 
$
\|u\|_{W^{1,1}_V } := \int_{\R^d}(1 + V(x))(|u(x)| + |\nabla u(x)|)dx.
$
respectively. We set 
$$
\sY_{k, V} := H^k(\R^n) \cap L^1_{V}, \quad \quad \sY_{k, V}^1 = H^k(\R^n) \cap W^{1,1}_V
$$
with the canonical norms 
$$
\|u\|_{\sY_{k, V}} := \|u\|_{H^k(\R^d)} + \|u\|_{L^1_V}, \quad \quad \|u\|_{\sY_{k, V}^1} := \|u\|_{H^k(\R^d)} + \|u\|_{W^{1,1}_V}.
$$


We will use constant $C(V)$ to denote a generic constant which depends on $V$. Similar rules apply to $C(K), C(V, K)$, etc.  We also use constants $C, \tilde{C}, \tilde{\tilde{C}}$  to denote generic constants that are independent of quantities of interest. The exact values of these constants may change from line to line.

\subsection{Main Results}

Our first result is the global well-posedness of the nonlinear mean field PDE \eqref{eq:mfl}.  Observe that \eqref{eq:mfl} is a nonlinear transport equation of the form $\partial_t \rho + \nabla \cdot (\rho U[\rho]) = 0$, where $U[\rho]$ is the vector field
\begin{equation}
U[\rho](x) = - (\nabla K * \rho)(x) -(K * (\nabla V \rho))(x), \quad x \in \R^d. \label{Uvecdef}
\end{equation}
Given a measure $\rho \in \P_V$, $U[\rho]$ is well-defined. In fact, due to Assumption \ref{ass:v} (A2), $U[\rho](x)$ is Lipschitz continuous and bounded over $\R^d$:
\begin{align} \label{Uunifbound}
 |U[\rho](x)| & \leq \| \nabla K \|_\infty + \| K \|_{\infty} C \| \rho \|_{\P_V},  \\
|\nabla U[\rho](x)| & \leq \| D^2 K \|_\infty + \| D K \|_{\infty} C \| \rho \|_{\P_V}. \nonumber
\end{align} 
We say that a measure-valued function $\rho \in C([0,\infty);\P)$ (where $\P$ is given the topology of weak convergence) is a weak solution to \eqref{eq:mfl} with initial condition $\rho_0 = \nu \in \P_V$ if
\begin{equation}
\sup_{t \in [0,T]} \| \rho_t\|_{\P_V} < \infty, \quad \forall \;T > 0 \label{weakMoment1}
\end{equation}
and
\[
\int_0^\infty \int_{\R^d} \left( \partial_t \phi(t,x) + \nabla \phi(t,x) \cdot U[\rho_t](x) \right) \rho_t(dx) \,dt + \int_{\R^d} \phi(0,x) \nu(dx) = 0
\]
holds for all $\phi \in C^\infty_0([0,\infty) \times \R^d)$. Recall that $\rho_t = \rho(t,\cdot)$.

\begin{theorem}\label{thm:weak}
Let $V$ satisfy \cref{ass:v}. For any $\nu \in \P_V$, there is a unique $\rho \in C([0,\infty); \P_V)$ which is a weak solution to \eqref{eq:mfl} with initial condition $\rho_0 = \nu$. Moreover there is $C_1 > 0$ (depending on $K$ and $V$) such that
\begin{equation}
\| \rho_t \|_{\P_V} \leq e^{C_1 t} \|\nu \|_{\P_V}, \quad t \geq 0. \label{expVbound}
\end{equation}
If $\nu \in \P_p \cap \P_V$, then $\rho \in C([0,\infty); \P_p)$, as well, with $\| \rho_t\|_{\P_p} \leq e^{C_2 t}  \| \nu \|_{\P_p}$.
\end{theorem}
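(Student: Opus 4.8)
\emph{Proof plan.} The plan is to construct the weak solution from a self-consistent (mean field) characteristic flow, to deduce uniqueness from a Wasserstein stability estimate, and to upgrade local existence to global existence by means of an a priori moment bound whose decisive ingredient is the positive definiteness of $K$.

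\emph{Local existence and uniqueness.} Fix $T>0$. Given a curve $\mu=(\mu_t)_{t\le T}$ of probability measures with $\sup_{t\le T}\|\mu_t\|_{\P_V}$ finite, \eqref{Uunifbound} shows that $U[\mu_t]$ is bounded and globally Lipschitz in space, uniformly in $t$, so the ODE $\dot Y_t(x)=U[\mu_t](Y_t(x))$, $Y_0(x)=x$, defines a flow $Y_t$ on $\R^d$; set $\Gamma(\mu)_t:=(Y_t)_\#\nu$. Using \eqref{Uunifbound} together with the continuous dependence of $\mu\mapsto U[\mu]$ on curves with uniformly bounded moments --- which follows from $\|\nabla K\|_\infty,\|K\|_\infty<\infty$, Kantorovich--Rubinstein duality, and \cref{ass:v} (A2) (and \eqref{eq:v2}) to handle the unboundedness of $\nabla V$ --- one checks that $\Gamma$ maps a suitable complete metric space of measure-valued curves into itself and is a contraction on a short interval $[0,T_0]$, with $T_0$ depending only on $\|\nu\|_{\P_V}$, $K$, $V$. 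Its fixed point $X_t$ gives $\rho_t:=(X_t)_\#\nu$, and a routine computation shows $\rho\in C([0,T_0];\P_V)$ is a weak solution of \eqref{eq:mfl} with $\rho_0=\nu$. Since $U[\rho_t]$ is Lipschitz in space, any weak solution satisfying \eqref{weakMoment1} is transported by its own characteristic flow, so the same contraction estimate yields uniqueness of weak solutions with a prescribed datum.

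\emph{The a priori moment bound, and globalization.} Writing $\rho_t=(X_t)_\#\nu$ and inserting \eqref{Uvecdef},
\begin{align*}
\frac{d}{dt}\int_{\R^d} V(X_t(x))\,d\nu(x) &= -\iint \nabla V(X_t(x))\cdot\nabla K(X_t(x)-X_t(y))\,d\nu(x)\,d\nu(y)\\
&\quad -\iint \nabla V(X_t(x))\cdot\nabla V(X_t(y))\,K(X_t(x)-X_t(y))\,d\nu(x)\,d\nu(y).
\end{align*}
By the positive definiteness of $K$ in \cref{ass:k} --- applied componentwise to the vector fields $\nabla V(X_t(\cdot))$, after approximating $\nu$ by empirical measures --- the second double integral is $\geq 0$ and thus harmless; the first is bounded by $\|\nabla K\|_\infty\int_{\R^d}|\nabla V(X_t(x))|\,d\nu(x)$, and by \cref{ass:v} (A2) and Jensen's inequality, $\int|\nabla V|\,d\rho_t\le (C_V\|\rho_t\|_{\P_V})^{1/q}\le C(1+\|\rho_t\|_{\P_V})$. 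Hence $\frac{d}{dt}\|\rho_t\|_{\P_V}\le C_1\|\rho_t\|_{\P_V}$, and Grönwall gives \eqref{expVbound}. In particular the $\P_V$-norm cannot blow up in finite time, so the local solution extends to $[0,\infty)$ and uniqueness propagates, while continuity of $t\mapsto X_t$ gives $\rho\in C([0,\infty);\P_V)$. When $\nu\in\P_p\cap\P_V$, differentiating $\int|X_t(x)|^p\,d\nu(x)=p\int|X_t|^{p-2}X_t\cdot U[\rho_t](X_t)\,d\nu\le p\|U[\rho_t]\|_\infty\int|X_t|^{p-1}\,d\nu$ and combining \eqref{Uunifbound}, \eqref{expVbound} and $|x|^{p-1}\le 1+|x|^p$ yields $\frac{d}{dt}\|\rho_t\|_{\P_p}\le C(t)(1+\|\rho_t\|_{\P_p})$ with $C(t)$ depending on $\|\nu\|_{\P_V}$; a second Grönwall argument gives $\|\rho_t\|_{\P_p}\le e^{C_2 t}\|\nu\|_{\P_p}$ and $\rho\in C([0,\infty);\P_p)$.

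\emph{Main obstacle.} The delicate step is the global a priori bound. A naïve estimate of $\int\nabla V(X_t)\cdot U[\rho_t](X_t)\,d\nu$ produces a term quadratic in $\|\rho_t\|_{\P_V}$, coming from the interaction of $\nabla V$ with $K\ast(\nabla V\rho_t)$, which would only give local-in-time control and possible finite-time blow-up; the resolution --- and the structural feature that distinguishes \eqref{eq:mfl} from a generic nonlinear transport equation --- is that this quadratic term carries a definite, favorable sign by the positive definiteness of $K$. A secondary, purely technical point is establishing the continuity of $\mu\mapsto U[\mu]$ in a Wasserstein-type metric despite $\nabla V$ being unbounded, which forces one to work on sets of measures with uniformly bounded moments and to exploit \cref{ass:v} (A2) to control the relevant difference quotients.
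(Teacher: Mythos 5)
Your overall strategy coincides with the paper's: construct a mean-field characteristic flow by a fixed-point argument, obtain the global $\P_V$ bound by discarding the quadratic $\nabla V$--$K$--$\nabla V$ term via the positive definiteness of $K$ and applying Gr\"onwall to the remaining cross term, and reduce uniqueness of weak solutions to uniqueness for the linear transport equation driven by $U[\rho_t]$ plus uniqueness of the flow; you correctly single out the sign of the quadratic term as the decisive structural point. The one genuine difference is where the contraction lives: you iterate on curves of measures $\mu\mapsto (Y_t)_\#\nu$ and therefore need continuity of $\mu\mapsto U[\mu]$ in a Wasserstein-type metric, whereas the paper contracts directly on the flow maps in the uniform metric on $\{u:\ \sup_x|u(x)-x|\le r\}$, so both candidate maps are integrated against the \emph{same} measure $\nu$ and the unboundedness of $\nabla V$ is absorbed through \cref{ass:v} (A2)--(A3) by factors $(1+V(x'))$ that are integrable against $\nu\in\P_V$. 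In your variant, plain Kantorovich--Rubinstein ($\W_1$) duality is not sufficient: $y\mapsto K(x-y)\nabla V(y)$ is not uniformly Lipschitz, so the estimate of $U[\mu_t]-U[\mu'_t]$ must be run with couplings, H\"older's inequality and the $\W_p$ distance for $p=q/(q-1)$ together with uniform $\P_V$-moment bounds --- essentially the computation the paper carries out for \cref{thm:stability} --- so this step is repairable but heavier than "duality" suggests. Finally, your $\P_p$ estimate as written does not deliver the stated bound: since $\|U[\rho_t]\|_\infty$ grows like $e^{C_1 t}$ by \eqref{Uunifbound} and \eqref{expVbound}, the inequality $\frac{d}{dt}\|\rho_t\|_{\P_p}\le Ce^{C_1 t}\left(1+\|\rho_t\|_{\P_p}\right)$ only yields a doubly exponential bound of the form $\exp\left(C(e^{C_1 t}-1)\right)$; the paper instead integrates the velocity bound once to get the displacement estimate $\sup_x|X(t,x,\nu)-x|\le Ce^{Ct}$ and uses $|X(t,x,\nu)|^p\le 2^p|x|^p+2^p|X(t,x,\nu)-x|^p$, which gives the single-exponential growth claimed in \cref{thm:weak}.
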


The theorem is proved in Section~\ref{sec:mfcf}
Our next result, proved in Section~\ref{sec:estimateps},  establishes that the finite particle system is well-posed, and that the associated empirical measure is a weak solution of the PDE \eqref{eq:mfl}:

\begin{proposition}\label{prop:existence-ps}
 Let $V$ satisfy \cref{ass:v}. Then for any initial condition 
 $\mathbf{x}^{0} = \{x_i^{0}\}_{i=1}^N \in \R^{dN}$, the system \eqref{eq:ps} has a unique global solution $\mathbf{x}(t) = \{x_i(t)\}_{i=1}^N\in C^1([0, \infty); \R^{dN})$, and the measure $\mu^N_t = \frac{1}{N} \sum_{i=1}^N \delta_{x_i(t)}$ is a weak solution to the PDE \eqref{eq:mfl}. 
\end{proposition}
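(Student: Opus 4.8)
The plan is to handle existence and uniqueness by standard ODE theory, and to verify the weak-solution property by a direct test-function computation. The right-hand side of \eqref{eq:ps}, regarded as a map $F\colon\R^{dN}\to\R^{dN}$, $\mathbf{x}=(x_1,\dots,x_N)\mapsto F(\mathbf{x})$, is $C^\infty$ since $K$ and $V$ are smooth; hence $F$ is locally Lipschitz and the Cauchy--Lipschitz theorem yields a unique maximal solution $\mathbf{x}\in C^1([0,T^\ast);\R^{dN})$ with $T^\ast\in(0,\infty]$. To promote this to a global solution I would derive an a priori bound on $\|\mu^N_t\|_{\P_V}=\frac1N\sum_{i=1}^N\bigl(1+V(x_i(t))\bigr)$. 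Since $\mu^N_t=\frac1N\sum_j\delta_{x_j(t)}$, the velocity field in \eqref{eq:ps} is exactly $\dot x_i(t)=U[\mu^N_t](x_i(t))$ with $U$ as in \eqref{Uvecdef}, and a direct computation gives
\[
\frac{d}{dt}\,\frac1N\sum_{i=1}^N V(x_i(t))
= -\frac{1}{N^2}\sum_{i,j}\nabla V(x_i)\cdot\nabla K(x_i-x_j)
-\frac{1}{N^2}\sum_{i,j}K(x_i-x_j)\,\nabla V(x_i)\cdot\nabla V(x_j).
\]

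The key observation, which I expect to be the heart of the matter, is that the second sum is nonnegative: applying the positive definiteness of \cref{ass:k} coordinatewise with $\xi_i=\partial_k V(x_i)$ and summing over $k=1,\dots,d$ shows $\sum_{i,j}K(x_i-x_j)\,\nabla V(x_i)\cdot\nabla V(x_j)\ge0$. This is precisely what prevents finite-time blow-up: the naive estimate of this quadratic-in-$\nabla V$ term is superlinear in $\|\mu^N_t\|_{\P_V}$ (since \cref{ass:v}(A2) only provides $q>1$), and a Gronwall inequality could not absorb it. Dropping that term and using $|\nabla K|\le\|\nabla K\|_\infty$ together with $|\nabla V(x)|\le C_V^{1/q}(1+V(x))^{1/q}\le C_V^{1/q}(1+V(x))$ from \cref{ass:v}(A2), one obtains $\frac{d}{dt}\|\mu^N_t\|_{\P_V}\le C_1\|\mu^N_t\|_{\P_V}$, hence $\|\mu^N_t\|_{\P_V}\le e^{C_1 t}\|\mu^N_0\|_{\P_V}$ on $[0,T^\ast)$, in parallel with \eqref{expVbound}. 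Since $V\ge0$, each $V(x_i(t))$ is bounded on $[0,\min(T,T^\ast))$ for every finite $T$, so the coercivity in \cref{ass:v}(A1) confines $\mathbf{x}(t)$ to a compact subset of $\R^{dN}$; a trajectory trapped in a compact set cannot escape in finite time, so $T^\ast=\infty$. Uniqueness is already provided by Cauchy--Lipschitz, and $\mu^N_0\in\P_V$ automatically since it is a finite convex combination of Dirac masses.

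For the weak-solution claim, fix $\phi\in C^\infty_0([0,\infty)\times\R^d)$ and differentiate $t\mapsto\int_{\R^d}\phi(t,x)\,\mu^N_t(dx)=\frac1N\sum_i\phi(t,x_i(t))$; by the chain rule and $\dot x_i=U[\mu^N_t](x_i)$,
\[
\frac{d}{dt}\int_{\R^d}\phi(t,x)\,\mu^N_t(dx)
=\int_{\R^d}\Bigl(\partial_t\phi(t,x)+\nabla\phi(t,x)\cdot U[\mu^N_t](x)\Bigr)\,\mu^N_t(dx).
\]
Integrating over $t\in[0,\infty)$ and using $\phi(t,\cdot)\equiv0$ for large $t$ recovers exactly the weak formulation of \eqref{eq:mfl} with initial datum $\mu^N_0$. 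The moment bound \eqref{weakMoment1} is the a priori estimate established above, and $t\mapsto\mu^N_t$ is continuous into $\P$ with the weak topology because each $x_i(\cdot)$ is continuous; thus $\mu^N\in C([0,\infty);\P_V)$ is a weak solution of \eqref{eq:mfl}. Every step other than the positive-definiteness cancellation is either a routine computation or a standard ODE fact.
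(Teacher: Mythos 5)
Your argument is correct, and the core of it—local well-posedness by Cauchy--Lipschitz, the differential inequality for $\frac1N\sum_i V(x_i(t))$, dropping the quadratic term in $\nabla V$ by positive definiteness of $K$ (you even spell out the coordinatewise choice $\xi_i=\partial_k V(x_i)$, which the paper leaves implicit), the Gronwall bound $e^{C_1 t}$, and the coercivity of $V$ from \cref{ass:v}~(A1) to rule out finite-time blow-up—is exactly the paper's proof of the global existence statement, down to the same estimate \eqref{eq:dbdV}. Where you diverge is the weak-solution claim: the paper does not verify the weak formulation directly, but instead identifies $x_i(t)=X(t,x_i^0,\mu^N_0)$ and $\mu^N_t=(X(t,\cdot,\mu^N_0))_\#\mu^N_0$ with the mean field characteristic flow of Definition~\ref{def:mfcf}, and then invokes the proof of Theorem~\ref{thm:weak} (which rests on the push-forward representation and a standard transport-equation result). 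Your route—differentiating $t\mapsto\frac1N\sum_i\phi(t,x_i(t))$ for a test function, using $\dot x_i=U[\mu^N_t](x_i)$, and integrating in time—is more elementary and self-contained, since it bypasses the characteristic-flow machinery entirely; what the paper's route buys in exchange is the explicit identification of the particle trajectories with the characteristic flow started from the empirical measure, a representation that is convenient elsewhere in the paper. Both arguments are complete and yield the same moment bound \eqref{weakMoment1}, so this is a legitimate alternative proof of the weak-solution part rather than a gap.
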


In particular, the bound \eqref{expVbound} holds for the empirical measure $\mu_t^N$. With additional assumptions about the behavior of $V$ and $K$ as $|x| \to \infty$, we are able to improve upon \eqref{expVbound} and show that $\| \mu_t^N \|_{\P_V}$ is bounded in time; see Lemma \ref{lem:bddintime} below.

When the initial condition is more regular, then the weak solution inherits higher regularity, as described by the following proposition. We remark that this regularity result will not be used in our proof of the mean field limit,  but is of interest on its own account from the PDE perspective. 

\begin{proposition}\label{prop:inteq}
  Let $V$ satisfy \cref{ass:v}. Suppose that $\nu \in \P_V$ has a
  density $\rho_0(x) \geq 0$. If $\rho_t$ is the unique weak solution
  to \eqref{eq:mfl} with this initial condition, then $\rho_t$ also
  has a density. Furthermore, if $\rho_0 \in \sY^1_{k, V}$ for some
  $k \geq 2$, and the kernel $K$ is $k+2$ times differentiable with
  bounded derivatives, then $\rho_t$ has a density satisfying
  \be\label{eq:apriori1} \|\rho_t(\cdot)\|_{\sY^1_{k, V}} \leq
  \exp\Big( C_2 (e^{ C_1 t} - 1)\lVert \rho_0 \rVert_{\P_V} \Big)
  \|\rho_0\|_{\sY^1_{k, V}}, \quad t \geq 0 \en where the constants
  $C_1, C_2$ depend only on $V$ and $K$.
\end{proposition}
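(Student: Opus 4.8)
\emph{Proof plan.}\quad The plan is to exploit the representation of the unique weak solution by the mean field characteristic flow, established in \cref{sec:conv}: one has $\rho_t = (X_t)_\#\rho_0$, where $X_t$ is the forward flow of the non-autonomous vector field $U[\rho_t]$ from \eqref{Uvecdef}, i.e. $\dot X_t(x) = U[\rho_t](X_t(x))$, $X_0(x) = x$. Since $\rho_t\in\P_V$ with $\sup_{[0,T]}\|\rho_t\|_{\P_V}<\infty$ by \cref{thm:weak}, the bounds \eqref{Uunifbound} show $U[\rho_t]$ is bounded and Lipschitz, uniformly on each $[0,T]$; hence $X_t$ is a bi-Lipschitz homeomorphism of $\R^d$ with bi-Lipschitz inverse $Y_t := X_t^{-1}$. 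A bi-Lipschitz map carries Lebesgue-null sets to Lebesgue-null sets, so if $\nu$ has a density $\rho_0\ge 0$ then $(X_t)_\#(\rho_0\,dx)\ll dx$; thus $\rho_t$ has a density, it is nonnegative, and by the change-of-variables formula $\rho_t = (\rho_0\circ Y_t)\,|\det\nabla Y_t|$ a.e.

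For the quantitative bound \eqref{eq:apriori1} the key structural point is that differentiating $U[\rho] = -\nabla K*\rho - K*(\nabla V\rho)$ moves derivatives onto the kernel, not onto $\rho$: for $0\le m\le k+1$,
\[
\nabla^m U[\rho_t] = -\,\nabla^{m+1}K*\rho_t - \nabla^m K*(\nabla V\,\rho_t),
\]
so, using $\|\rho_t\|_{L^1}=1$ and $|\nabla V|\le C_1(1+V)$ from \eqref{eq:asv3}, whenever $K$ is $k+2$ times differentiable with bounded derivatives one gets $\|\nabla^m U[\rho_t]\|_{L^\infty}\le C(K)(1+\|\rho_t\|_{\P_V})$ for all $0\le m\le k+1$. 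Treating $\rho_t$ as a sufficiently regular solution (justified below), I would then run two estimates. First, an $H^k$ energy estimate: differentiating the transport equation $\partial_t\rho + U\cdot\nabla\rho + (\nabla\cdot U)\rho = 0$ up to order $k$, using Moser-type product and commutator inequalities, and integrating the top-order transport term by parts (leaving only $\frac{1}{2}\int(\nabla\cdot U)|\nabla^k\rho|^2$), gives
\[
\frac{d}{dt}\|\rho_t\|_{H^k}^2 \le C(K)(1+\|\rho_t\|_{\P_V})\|\rho_t\|_{H^k}^2 ,
\]
because every coefficient that appears is an $L^\infty$ norm of some $\nabla^m U[\rho_t]$ with $m\le k+1$. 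Second, a weighted $L^1$ estimate for $\nabla\rho_t$: the components $w_i = \partial_i\rho$ solve $\partial_t w_i + \nabla\cdot(w_i U) + \nabla\cdot(\rho\,\partial_i U) = 0$; testing against $(1+V)\,\mathrm{sgn}(w_i)$ (suitably regularized), using $\mathrm{sgn}(w_i)\nabla\cdot(w_i U) = \nabla\cdot(|w_i|U)$, integrating the weight by parts, and using $|\nabla V| + |\nabla^2 V|\le C_1(1+V)$ (\cref{ass:v}(A2)-(A3), cf. \eqref{eq:asv3}) to absorb the remaining terms gives
\[
\frac{d}{dt}\int_{\R^d}(1+V)|\nabla\rho_t|\,dx \le C(V,K)(1+\|\rho_t\|_{\P_V})\Big(\int_{\R^d}(1+V)|\nabla\rho_t|\,dx + \|\rho_t\|_{\P_V}\Big).
\]
Since $\rho_t\ge 0$ gives $\|\rho_t\|_{\sY^1_{k,V}} = \|\rho_t\|_{H^k} + \int_{\R^d}(1+V)|\nabla\rho_t|\,dx + \|\rho_t\|_{\P_V}$, adding these two inequalities to the bound $\frac{d}{dt}\|\rho_t\|_{\P_V}\le C_1\|\rho_t\|_{\P_V}$ behind \eqref{expVbound} yields a single differential inequality $\frac{d}{dt}\|\rho_t\|_{\sY^1_{k,V}}\le C(V,K)(1+\|\rho_t\|_{\P_V})\|\rho_t\|_{\sY^1_{k,V}}$. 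Inserting $\|\rho_t\|_{\P_V}\le e^{C_1 t}\|\rho_0\|_{\P_V}$ from \eqref{expVbound}, applying Grönwall, and using $\|\rho_0\|_{\P_V}\ge 1$ (as $\rho_0$ is a probability density) together with $C_1 t\le e^{C_1 t}-1$ then gives \eqref{eq:apriori1} after renaming constants.

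To make the formal estimates legitimate for merely $\rho_0\in\sY^1_{k,V}$, I would use a standard approximation: pick smooth, nonnegative, compactly supported $\rho_0^\eps\to\rho_0$ in $\sY^1_{k,V}$ with $\|\rho_0^\eps\|_{\P_V}$ bounded; the solution $\rho_t^\eps = (X_t^\eps)_\#\rho_0^\eps$ is then regular enough for the computations above, since the flow is $C^{k+1}$ in $x$ (apply the $L^\infty$ bounds on $\nabla^m U$ to its variational equations) and $\rho_t^\eps = (\rho_0^\eps\circ Y_t^\eps)|\det\nabla Y_t^\eps|$, and one passes to the limit $\eps\to 0$ using uniqueness of weak solutions and the uniform bound \eqref{eq:apriori1}. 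Equivalently, the same change-of-variables identity together with $V(X_t(x))\le C(t)(1+V(x))$ --- which follows from \cref{ass:v}(A3) and $|X_t(x)-x|\le\|U[\rho_t]\|_{L^\infty}t$, and is already used in proving \cref{thm:weak} --- shows $\rho_t\in\sY^1_{k,V}$ directly, after which the differential inequalities apply to the exact solution.

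I expect the main obstacle to be the $H^k$ energy estimate, and in particular the bookkeeping that keeps \emph{every} constant in the Grönwall inequality dependent only on $\|\rho_t\|_{\P_V}$ (hence, through \eqref{expVbound}, explicitly on $t$, $\|\rho_0\|_{\P_V}$, $V$ and $K$) rather than on any higher norm of $\rho_t$; this is exactly where the convolution structure of $U[\rho]$ is indispensable, as it prevents the nonlinearity from costing derivatives of $\rho$. The weighted $L^1$ estimate for $\nabla\rho_t$ is the place where \cref{ass:v}(A2)-(A3) is essential (to absorb the weight-commutator term and the $\nabla^2 U$ term), and a last point needing care is ensuring the approximation $\rho_0^\eps\to\rho_0$ does not degrade the limiting constant.
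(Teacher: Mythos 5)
Your proposal follows essentially the same route as the paper's own proof: absolute continuity of $\rho_t$ via the characteristic flow and change of variables, $C^{k+1}_B$ bounds on $U[\rho_t]$ controlled only by $\|\rho_t\|_{\P_V}$ thanks to the convolution structure, a regularized weighted $L^1$ estimate for $\nabla\rho_t$ using \cref{ass:v} (A2)--(A3), an $H^k$ energy estimate with the top-order transport term integrated by parts, and Gr\"onwall combined with \eqref{expVbound}. The only cosmetic difference is how the a priori $H^k$ regularity needed to legitimize the formal computations is justified (the paper cites Lemma 2.8 of \cite{laurent2007local} for the linear transport equation with $C^{k+1}_B$ velocity, whereas you propose an approximation of the initial data), which does not change the substance of the argument.
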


In the case that $V = 0$, a similar regularity result to \cref{eq:apriori1}  was proved for aggregation equation by Laurent \cite{laurent2007local}.  The presence of
the potential $V$ makes the problem more difficult since the velocity $\nabla V$ is unbounded at infinity. This difficulty was circumvented with the help of the mean field characteristic flow (c.f. \cref{def:mfcf}), which allows us to express the solution $\rho_t$ in terms of the initial condition and the flow map. The regularity of $\rho_t$ simply transfers from that of $K$ provided we can show $\rho(t,\cdot) \in W^{1,1 }_V$. See the detailed proof in \cref{sec:wellpose}.

Next, we prove a stability estimate for weak solutions to \eqref{eq:mfl}.

\begin{theorem}\label{thm:stability}
Let $V$ satisfy \cref{ass:v} with $q \in (1,\infty)$ in (A2). 
Let $p$ be the conjugate index of $q$, i.e. $\frac{1}{p} +\frac{1}{q} = 1$.
Let $R > 0$. Assume that $\nu_1, \nu_2$ are two initial probability measures in $\P_p$ satisfying $\| \nu_i \|_{\P_p} \leq R$, $i = 1,2$.  Let $\mu_{1,t}$ and $\mu_{2,t}$ be the associated weak solutions to \eqref{eq:mfl}.  Then given any $T>0$, there exists a constant $C > 0$ depending on $ K, V, R, p$ and $T$ such that 
\be\label{eq:stability}
\sup_{t \in [0,T]} \W_p(\mu_{1,t}, \mu_{2, t}) \leq C \W_p(\nu_1, \nu_2).
\en
\end{theorem}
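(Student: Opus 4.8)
The plan is to represent both weak solutions through the mean field characteristic flow and to run a Grönwall estimate on a Wasserstein-type functional along a coupled pair of flow maps. Recall from the construction behind \cref{thm:weak} that for $i=1,2$ the weak solution is $\mu_{i,t}=(X^i_t)_\#\nu_i$, where the flow map $X^i_t:\R^d\to\R^d$ solves $\partial_t X^i_t(x)=U[\mu_{i,t}](X^i_t(x))$ with $X^i_0(x)=x$. Fix $T>0$. Since $\|\nu_i\|_{\P_p}\le R$ and $\P_p\subset\P_V$, \cref{thm:weak} furnishes uniform bounds $\sup_{t\in[0,T]}\bigl(\|\mu_{i,t}\|_{\P_V}+\|\mu_{i,t}\|_{\P_p}\bigr)\le M$ with $M=M(K,V,R,p,T)$; in particular \eqref{Uunifbound} gives $\|U[\mu_{i,t}]\|_\infty+\|\nabla U[\mu_{i,t}]\|_\infty\le L$ on $[0,T]$ for some $L=L(K,V,R,T)$.

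Let $\pi_0$ be an optimal coupling of $\nu_1$ and $\nu_2$ for $\W_p$. Then $(X^1_t,X^2_t)_\#\pi_0$ is a coupling of $\mu_{1,t}$ and $\mu_{2,t}$, so
\[
\W_p(\mu_{1,t},\mu_{2,t})^p\le D(t):=\int_{\R^{2d}}\bigl|X^1_t(x)-X^2_t(y)\bigr|^p\,d\pi_0(x,y),\qquad D(0)=\W_p(\nu_1,\nu_2)^p.
\]
Write $\Delta_t=\Delta_t(x,y)=X^1_t(x)-X^2_t(y)$. The moment bounds and the uniform boundedness of the vector fields $U[\mu_{i,t}]$ legitimise differentiation under the integral, and since $p>1$ (so $|\cdot|^p\in C^1$),
\[
D'(t)\le p\int_{\R^{2d}}|\Delta_t|^{p-1}\bigl|U[\mu_{1,t}](X^1_t(x))-U[\mu_{2,t}](X^2_t(y))\bigr|\,d\pi_0.
\]
I would split $U[\mu_{1,t}](X^1_t(x))-U[\mu_{2,t}](X^2_t(y))$ as $\bigl(U[\mu_{1,t}](X^1_t(x))-U[\mu_{1,t}](X^2_t(y))\bigr)$, whose norm is $\le L|\Delta_t|$, plus the ``measure-difference'' term $\bigl(U[\mu_{1,t}]-U[\mu_{2,t}]\bigr)(z)$ at $z=X^2_t(y)$.

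The crux is to bound $\bigl(U[\mu_{1,t}]-U[\mu_{2,t}]\bigr)(z)$ \emph{uniformly in $z$}. By \eqref{Uvecdef} it equals $-\int\nabla K(z-w)\,d(\mu_{1,t}-\mu_{2,t})(w)-\int K(z-w)\nabla V(w)\,d(\mu_{1,t}-\mu_{2,t})(w)$; testing each kernel against $\mu_{1,t}-\mu_{2,t}$ through the coupling $(X^1_t,X^2_t)_\#\pi_0$ and using $|\nabla K(z-a)-\nabla K(z-b)|\le\|D^2K\|_\infty|a-b|$ together with
\[
|K(z-a)\nabla V(a)-K(z-b)\nabla V(b)|\le\|\nabla K\|_\infty|\nabla V(a)|\,|a-b|+\|K\|_\infty\sup_{\theta\in[0,1]}|\nabla^2V(\theta a+(1-\theta)b)|\,|a-b|,
\]
and then bounding $|\nabla V|$ and the Hessian term by $C(1+V)^{1/q}$ via \cref{ass:v}(A2), one gets
\[
\bigl|\bigl(U[\mu_{1,t}]-U[\mu_{2,t}]\bigr)(z)\bigr|\le\|D^2K\|_\infty\int|\Delta_t|\,d\pi_0+C\int\bigl(1+V(X^1_t(x))+V(X^2_t(y))\bigr)^{1/q}|\Delta_t|\,d\pi_0.
\]
Here the conjugate exponent enters decisively: Hölder with exponents $q$ and $p$ converts the second integral into $\bigl(\int(1+V(X^1_t(x))+V(X^2_t(y)))\,d\pi_0\bigr)^{1/q}D(t)^{1/p}\le(2M)^{1/q}D(t)^{1/p}$, while the first is $\le D(t)^{1/p}$ by Jensen; hence $\bigl|\bigl(U[\mu_{1,t}]-U[\mu_{2,t}]\bigr)(z)\bigr|\le C_T\,D(t)^{1/p}$ with $C_T=C_T(K,V,R,p,T)$, independent of $z$.

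Inserting both pieces into the bound for $D'(t)$ and using $\int|\Delta_t|^{p-1}\,d\pi_0\le D(t)^{(p-1)/p}$ (Hölder once more) yields $D'(t)\le pL\,D(t)+pC_T D(t)^{1/p}D(t)^{(p-1)/p}=p(L+C_T)D(t)$; Grönwall then gives $D(t)\le e^{p(L+C_T)t}\W_p(\nu_1,\nu_2)^p$, and taking $p$-th roots and supremizing over $t\in[0,T]$ produces \eqref{eq:stability} with $C=e^{(L+C_T)T}$. The main obstacle is exactly the $K*(\nabla V\rho)$ term of $U[\rho]$: since $\nabla V$ and $\nabla^2V$ are unbounded, $U[\mu_{1,t}]-U[\mu_{2,t}]$ cannot be controlled by $\W_1(\mu_{1,t},\mu_{2,t})$; the cure is to keep the coupling explicit and absorb the polynomial weight $(1+V)^{1/q}$ through Hölder against the $\P_V$-moment bound of \cref{thm:weak}, which is precisely why the statement must use $\W_p$ with $p$ conjugate to $q$. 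The remaining technical points — absolute continuity of $t\mapsto D(t)$ and differentiation under the integral — are routine consequences of the uniform $L^\infty$ bound on $U[\mu_{i,t}]$ and the $\P_p$-moment bound.
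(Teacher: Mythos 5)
Your proof is correct and follows essentially the same route as the paper: Dobrushin's coupling argument along the mean field characteristic flow, with a Gr\"onwall estimate on the coupled $p$-th moment and H\"older with the conjugate exponents $p,q$ to absorb the unbounded $\nabla V$, $\nabla^2 V$ factors via the propagated $\P_V$-bound. The only (immaterial) differences are that you split the drift as ``Lipschitz in space plus a uniform-in-$z$ measure-difference term'' rather than the paper's three-term split inside the coupled integral, fix an optimal coupling at time zero instead of infimizing at the end, and differentiate $|\cdot|^p$ directly instead of through the regularization $\phi_\delta$.
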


Theorem \ref{thm:stability} addresses the behavior of the particle system as $N \to \infty$.  Suppose that the initial points
$\{x_i^N(0)\}_{i=1}^N$ are such that $\W_p(\mu^N_0 , \nu_0) \to 0$ as
$N \to \infty$. Then if $\rho_t$ is the unique weak solution to \eqref{eq:mfl} with initial condition $\nu_0$, Theorem \ref{thm:stability} implies that $\W_p(\mu^N_t , \rho_t) \to 0$ uniformly over $[0,T]$, since $\mu^N_t$ is a weak solution to \eqref{eq:mfl}. This hypothesis of the convergence of the
initial empirical measure, i.e., $\W_{p}(\mu^N_0, \nu_0) \gt 0$, can
be justified rigorously, e.g.,  when the initial particles
$\{x^0_i\}$ are independent samples drawn from $\nu_0$.  For a
detailed discussion on the convergence of empirical measures in
$\W_p$, we refer the interested readers to references
\cite{fournier2015rate,weed2017sharp,bobkov2014one, trillos2014rate,LLL18}.

We prove \cref{thm:stability} in Section
\ref{sec:cps} by following Dobrushin's coupling argument 
\cite{dobrushin1979vlasov,golse2016dynamics} for the mean
field characteristic flow (defined later at \eqref{eq:mfcf}).  The proof follows closely the proof of
Theorem 1.4.1 of \cite{golse2016dynamics}, which dealt with the case $V = 0$. The stability estimate there was stated in terms of
$1$-Wasserstein distance, and mainly resulted from the Lipschitz condition of $\nabla K$.
However, we 
are only be able to prove the stability of mean field characteristic flow in $p$-Wasserstein distance with $p$ strictly larger than one. This is again due to the presence
of the nonlinear drift term $K\ast (\nabla V \rho)$ in the vector field \eqref{Uvecdef}.

Our last result pertains to the long time behavior of solutions $\rho_t$ of \eqref{eq:mfl} with sufficiently regular initial condition. Since the probability density $\rho_\infty(x) = e^{-V(x)}/Z$ is an invariant solution to the PDE \eqref{eq:mfl}, it is natural to ask whether $\rho_\infty$ is the unique invariant measure, and whether $\rho_t \to \rho_{\infty}$ as $t \to \infty$. Generally speaking, \eqref{eq:mfl} may admit many invariant measures.  For example, for any stationary solution to the finite particle system \eqref{eq:ps}, the empirical measure $\mu^N$ corresponds to a (stationary) weak solution of the PDE \eqref{eq:mfl}; there may be many such stationary solutions. However, if we restrict to initial conditions $\rho_0$ which are absolutely continuous with respect to $\rho_\infty$, one may expect that solutions to \eqref{eq:mfl} converge to $\rho_\infty$ as $t \to \infty$. The following theorem  confirms this intuition.   For technical reasons, 
we need to make further assumptions on the kernel $K$.

\begin{theorem}\label{thm:asymp}
 Let $V$ satisfy \cref{ass:v}. Assume that $K$ satisfies \cref{ass:k} and the following extra assumption:
 \be\label{eq:extraK}
K = K_{1/2}\ast K_{1/2}  \text{ with } K_{1/2}\in \mathcal{S} \text{ and } \hat{K}_{1/2}(\xi) > 0,\ \forall \xi \in \R^d.
\en
    Let $\rho_t$ be the solution to \eqref{eq:mfl} with initial
  condition $\rho_0\in \mathscr{Y}^1_{2,V}$ satisfying
  $\KL(\rho_0 \,||\, \rho_\infty) < \infty$.  Then $\rho_t$ converges
  weakly to $\rho_\infty$ as $t \gt \infty$.
\end{theorem}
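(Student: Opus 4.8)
The plan is to show that $\KL(\rho_t\,||\,\rho_\infty)$ is a strict Lyapunov functional for \eqref{eq:mfl} and to run a LaSalle-type argument in the narrow topology on $\P$, using \eqref{eq:extraK} both to turn the dissipation rate into a genuine $L^2$-norm and to convert its vanishing into the identity $\rho=\rho_\infty$. \emph{Step 1: the entropy--dissipation identity.} Set $g_t:=\nabla\rho_t+\rho_t\nabla V$; by \cref{prop:inteq} (applicable since $\rho_0\in\sY^1_{2,V}$) this is a genuine function and \eqref{eq:mfl} reads $\partial_t\rho_t=\nabla\cdot(\rho_t\,K\ast g_t)$. Differentiating $\KL(\rho_t\,||\,\rho_\infty)=\int\rho_t\log\rho_t\,dx+\int V\rho_t\,dx+\log Z$ in $t$ and integrating by parts gives, formally, $\frac{d}{dt}\KL(\rho_t\,||\,\rho_\infty)=-\int (K\ast g_t)\cdot g_t\,dx$; by the factorization $K=K_{1/2}\ast K_{1/2}$ and Plancherel this equals $-\|K_{1/2}\ast g_t\|_{L^2}^2=:-D(\rho_t)\le 0$. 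Making this rigorous --- above all, justifying the integration by parts against the unbounded drift $\nabla V$ --- is the main technical obstacle; one uses the regularity $\rho_t\in\sY^1_{2,V}=H^2\cap W^{1,1}_V$ from \cref{prop:inteq}, the growth bound $|\nabla V|\le C(1+V)$ from \eqref{eq:asv3}, and the uniform bound \eqref{Uunifbound} on $U[\rho_t]$ to dispose of the tail terms. Integrating in time yields $\int_0^\infty D(\rho_t)\,dt=\KL(\rho_0\,||\,\rho_\infty)-\lim_{t\to\infty}\KL(\rho_t\,||\,\rho_\infty)<\infty$; in particular $\KL(\rho_t\,||\,\rho_\infty)\le\KL(\rho_0\,||\,\rho_\infty)$ for all $t\ge 0$.

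\emph{Step 2: uniform moments and compactness.} The uniform KL bound produces a uniform-in-time moment bound $\sup_{t\ge 0}\|\rho_t\|_{\P_V}<\infty$. Indeed, \cref{ass:v} (A3), applied with a fixed point $y_0$ where $\nabla V(y_0)\ne 0$ and letting $|x|\to\infty$, forces $V$ to grow at least linearly, so $\int e^{-V/2}\,dx<\infty$; combined with the Gibbs inequality $\int\rho\log\rho\,dx\ge-\log\int e^{-V/2}\,dx-\tfrac12\int V\rho\,dx$, the bound on $\KL(\rho_t\,||\,\rho_\infty)$ yields a bound on $\int V\rho_t\,dx$. Since $V(x)\to\infty$ as $|x|\to\infty$, the orbit $\{\rho_t\}_{t\ge 0}$ is then uniformly tight, hence relatively compact in $\P$ with the narrow topology, and \eqref{Uunifbound} now gives $\sup_{t\ge 0}\|U[\rho_t]\|_\infty<\infty$. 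Also, since $\int_0^\infty D(\rho_t)\,dt<\infty$, we have $\int_s^{s+T}D(\rho_t)\,dt\to 0$ as $s\to\infty$ for every fixed $T>0$.

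\emph{Step 3: the $\omega$-limit set and conclusion.} Let $\rho_\ast$ be an $\omega$-limit point, say $\rho_{s_n}\wgt\rho_\ast$ narrowly along $s_n\to\infty$, and consider the shifted solutions $\rho^{(n)}(s):=\rho_{s_n+s}$ for $s\in[0,T]$. By the uniform bound on $\|U[\rho^{(n)}]\|_\infty$ the maps $s\mapsto\langle\rho^{(n)}(s),\phi\rangle$ are equi-Lipschitz for each test function $\phi$, so Arzel\`a--Ascoli together with uniform tightness gives a subsequence converging in $C([0,T];\P)$ to a weak solution $\rho^\ast$ of \eqref{eq:mfl} with $\rho^\ast(0)=\rho_\ast$. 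The functional $\rho\mapsto\|K_{1/2}\ast(\nabla\rho+\rho\nabla V)\|_{L^2}^2$ is lower semicontinuous along narrowly convergent $\P_V$-bounded sequences --- convolution with the Schwartz function $K_{1/2}$ upgrades narrow convergence to pointwise convergence, and then Fatou applies --- so $\int_0^T D(\rho^\ast(s))\,ds\le\liminf_n\int_0^T D(\rho^{(n)}(s))\,ds=\liminf_n\int_{s_n}^{s_n+T}D(\rho_t)\,dt=0$. Hence $K_{1/2}\ast(\nabla\rho^\ast(s)+\rho^\ast(s)\nabla V)=0$ for a.e.\ $s$. Since $\hat{K}_{1/2}$ is smooth, real and strictly positive everywhere by \eqref{eq:extraK}, multiplication by $\hat{K}_{1/2}$ is injective on tempered distributions, so $\nabla\rho^\ast(s)+\rho^\ast(s)\nabla V=0$, i.e.\ $\nabla(\rho^\ast(s)e^{V})=0$; as $\rho^\ast(s)$ is a probability measure this forces $\rho^\ast(s)=e^{-V}/Z=\rho_\infty$ for a.e.\ $s$, hence for all $s$ by continuity, and in particular $\rho_\ast=\rho^\ast(0)=\rho_\infty$. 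Thus the $\omega$-limit set of the orbit equals $\{\rho_\infty\}$, and since the orbit is relatively compact in the metrizable narrow topology this is exactly the statement $\rho_t\wgt\rho_\infty$ as $t\to\infty$.

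\emph{On the difficulty.} The only genuinely delicate step is Step 1: converting the formal entropy--dissipation computation into a rigorous identity, given that $\nabla V$ is unbounded and that the a priori estimate of \cref{prop:inteq} degrades as $t\to\infty$. Steps 2--3 are comparatively soft topological dynamics. A minor variant of Step 3 first extracts from $\int_0^\infty D(\rho_t)\,dt<\infty$ a sequence $t_n\to\infty$ with $D(\rho_{t_n})\to 0$ and identifies $\rho_{t_n}\wgt\rho_\infty$ directly by the same Fourier argument, invoking the shifted-solution compactness only to promote this to convergence of the full family $\rho_t$.
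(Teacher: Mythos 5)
Your proposal is correct, and its core mechanism is the same as the paper's: write the entropy dissipation as $\lVert K_{1/2}\ast(\nabla\rho_t+\rho_t\nabla V)\rVert_{L^2}^2$ via the factorization $K=K_{1/2}\ast K_{1/2}$ and Plancherel, pass to a narrow limit point, use strict positivity of $\hat K_{1/2}$ to kill the convolution, and conclude $\nabla(e^V\bar\rho)=0$, hence $\bar\rho=\rho_\infty$. Where you differ is in the packaging of compactness and of the vanishing of the dissipation. The paper gets tightness from compactness of relative-entropy sub-level sets (Dupuis--Ellis), while you derive a uniform-in-time $\P_V$ moment bound from the KL bound via a Gibbs inequality and the at-least-linear growth of $V$ extracted from (A3); both work, and yours has the side benefit of a uniform bound on $\|U[\rho_t]\|_\infty$. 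More substantively, the paper infers from monotonicity and boundedness of $\KL(\rho_t\,||\,\rho_\infty)$ that the dissipation itself tends to zero as $t\to\infty$, which strictly speaking only yields $\int_0^\infty D(\rho_t)\,dt<\infty$; your LaSalle device --- time-shifted solutions, vanishing of the time-averaged dissipation, and lower semicontinuity of $D$ along narrowly convergent $\P_V$-bounded sequences --- circumvents this and is the more robust route (and note that the weak-solution property of the shifted limit is not even needed: continuity in $s$ plus the l.s.c. of $D$ suffice, so you can skip justifying the limit passage in the nonlinear term). Two small caveats, at the same level of rigor as the paper itself: the injectivity of multiplication by $\hat K_{1/2}$ on tempered distributions should be argued locally (divide by the smooth nonvanishing $\hat K_{1/2}$ against $C_c^\infty$ test functions), since $\psi/\hat K_{1/2}$ need not be Schwartz; and the entropy--dissipation identity of your Step 1 does require the regularity and integration-by-parts justification you flag, exactly as in the paper's computation \eqref{eq:diffkl}.
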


\cref{thm:asymp} in particular implies that $\rho_\infty = e^{-V}/Z$
is the unique equilibrium of the mean field equation \cref{eq:mfl}
provided that the initial distribution $\rho_0$ has a density and
satisfies $\text{KL}(\rho_0 || \rho_\infty) <\infty$. However, if the
initial distribution is discrete, such as in the case of the particle
system \eqref{eq:ps}, there could be multiple equilibria, in which
case the long time behavior of $\rho_t$ may depend on the initial
distribution.

The proof of Theorem \ref{thm:asymp} is presented in \cref{sec:asymp}. 
A quantitative convergence rate is far from clear to us. The main
obstacle is the lack of a generalized logarithmic Sobolev inequality
which could lower bound the Stein discrepancy in terms of the relative
entropy. This issue is to be investigated in future works.  Another
important unresolved issue is whether ``generic'' stationary solutions
of the particle system \eqref{eq:ps} are close in some sense to
$\rho_\infty$, when $N$ is large.

\section{Well-posedness of the PDE and the particle system}\label{sec:conv}

In this section we prove Theorem \ref{thm:weak} and Proposition \ref{prop:existence-ps}. The main ingredient in the proof of Theorem \ref{thm:weak} is the so-called mean field characteristic flow, introduced in \cref{sec:mfcf}.  In \cref{sec:estimateps}, we prove Proposition \ref{prop:existence-ps} and an additional estimate on the particle system under strong assumptions on $V$.

\subsection{Mean field characteristic flow}\label{sec:mfcf}
Here we define the {\em mean field characteristic flow} for the PDE \eqref{eq:mfl} (c.f \cite{golse2016dynamics}), which will play an essential role in the proof of the large particle limit of \eqref{eq:ps}.

\begin{definition} \label{def:mfcf}
Given a probability measure $\nu$, we say that the map
$$
X(t, x, \nu): [0,\infty) \times \R^d \to \R^d
$$
is a mean field characteristic flow associated to the particle system \eqref{eq:ps} or to the mean field PDE \eqref{eq:mfl} if $X$ is $C^1$ in time and solves the following problem
 \be\label{eq:mfcf}
 \bea
&  \partial_t X(t,x, \nu)  = - (\nabla K \ast\mu_t)(X(t,x, \nu) ) - (K\ast (\nabla V \mu_t))(X(t,x, \nu)), \\
 &  \mu_t = X(t, \cdot, \nu)_{\#} \nu,\\
 & X(0, x, \nu) = x.
 \ena
 \en
\end{definition}

The expression $ \mu_t = X(t, \cdot, \nu)_{\#} \nu$ means that the measure $\mu_t$ is the push-forward of $\nu$ under the map $x \mapsto X(t,\cdot,\nu)$.  We think of $\{X(t,\cdot,\nu)\}_{t \geq 0, \nu}$ as a family of maps from $\R^d$ to $\R^d$, parameterized by $t$ and $\nu$. We first prove in the theorem below that the mean field characteristic flow \eqref{eq:mfcf}
is well-defined. To this end, define the set of functions
$$
Y  := \Big\{u\in C(\R^d; \R^d)\ |\  \sup_{x\in \R^d} |u(x) - x| < \infty \Big\},
$$
which is a complete metric space with the uniform metric $d_Y(u,v) = \sup_{x} |u(x) - v(x)|$. Recall the space of measures $\P_V$ defined in \eqref{Pmeasspace}.

\begin{theorem} \label{theo:mfcf}
Assume the conditions of Theorem \ref{thm:weak} hold, and $\nu \in \P_V$. For any $T>0$,  there exists a unique solution $X(\cdot,\cdot, \nu)\in C^1([0, T]; Y)$ to the problem \eqref{eq:mfcf}. Moreover, the measure $\mu_t = X(t, \cdot, \nu)_{\#} \nu$ satisfies $\| \mu_t\|_{\P_V} \leq e^{C \|\nabla K\|_{\infty} t }\| \nu\|_{\P_V}$, some constant $C$ that is independent of $\nu$. 
\end{theorem}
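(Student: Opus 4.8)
The plan is to obtain $X$ as the unique fixed point of a Picard-type map on a closed ball of the complete metric space $(C([0,T];Y),\,\sup_t d_Y)$, and then to promote this local result to all $T>0$ via an a priori moment estimate that rules out finite-time blow-up of the flow; the key structural input is that positive-definiteness of $K$ produces a favorable sign in that estimate.

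For $X\in C([0,T];Y)$, set $\mu_t:=X(t,\cdot)_\#\nu$ and define $\Gamma(X)$ to be the solution $Y(t,x)$ of $\partial_t Y=U[\mu_t](Y)$, $Y(0,x)=x$, with $U[\cdot]$ as in \eqref{Uvecdef} --- a non-autonomous ODE in which the non-local coupling has been frozen through the measures $\mu_t$. This is well-defined on the set $\mathcal{B}_{R_0,T}=\{X:X(0,\cdot)=\mathrm{id},\ \sup_{t,x}|X(t,x)-x|\le R_0\}$: first, when $\nu\in\P_V$ and $\sup_{t,x}|X(t,x)-x|\le R_0$, a first-order Taylor expansion of $V$ along the segment $[x,X(t,x)]$ together with Assumption \ref{ass:v}(A3) in the form \eqref{eq:asv3} gives $V(X(t,x))\le C(R_0)(1+V(x))$, so $\mu_t\in\P_V$ with $\|\mu_t\|_{\P_V}\le C(R_0)\|\nu\|_{\P_V}$; second, \eqref{Uunifbound} then makes $U[\mu_t]$ bounded and globally Lipschitz uniformly in $t\in[0,T]$, so the ODE has a unique solution with $\sup_x|Y(t,x)-x|\le t\sup_s\|U[\mu_s]\|_\infty$, and fixing $R_0=1$ and $T$ small makes $\Gamma$ map $\mathcal{B}_{R_0,T}$ into itself with $\Gamma(X)\in C^1([0,T];Y)$. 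For the contraction, if $\mu^i_t=X_i(t,\cdot)_\#\nu$, then using $\int f\,d\mu^i_t=\int f(X_i(t,x))\,d\nu(x)$, the $C^2$-regularity of $K$, the bound $|\nabla V(a)-\nabla V(b)|\le\sup_\theta|\nabla^2 V(\theta a+(1-\theta)b)|\,|a-b|$ with Assumption \ref{ass:v}(A2), and Hölder's inequality (recall $\nu$ is a probability measure), one gets $\|U[\mu^1_t]-U[\mu^2_t]\|_\infty\le C\,d_Y(X_1(t,\cdot),X_2(t,\cdot))$; feeding this and the Lipschitz bound for $U[\mu^1_t]$ into the integral form of $\Gamma(X_i)$ and applying Grönwall gives $d_Y(\Gamma(X_1)(t,\cdot),\Gamma(X_2)(t,\cdot))\le C\,t\,e^{CT}\sup_{s\le t}d_Y(X_1(s,\cdot),X_2(s,\cdot))$, which after shrinking $T$ further (or passing to an exponentially weighted metric, or iterating $\Gamma$) is a genuine contraction. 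Hence there is a unique fixed point on $[0,T]$, which solves \eqref{eq:mfcf}; $C^1$-in-$t$ regularity is immediate from the ODE.

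For the moment bound and globalization, differentiate $\|\mu_t\|_{\P_V}=\int(1+V(X(t,x)))\,d\nu(x)$ along the flow and change variables back to $\mu_t$:
\[
\frac{d}{dt}\|\mu_t\|_{\P_V}=\int\!\int\nabla V(y)\cdot U[\mu_t](y)\,d\mu_t(y)=-I_t-J_t,
\]
where, corresponding to the two terms of \eqref{Uvecdef},
\[
I_t=\int\!\int\nabla V(y)\cdot\nabla K(y-z)\,d\mu_t(y)\,d\mu_t(z),\qquad J_t=\int\!\int K(y-z)\,\nabla V(y)\cdot\nabla V(z)\,d\mu_t(y)\,d\mu_t(z).
\]
Now $J_t\ge0$ by the positive-definiteness of $K$ applied component by component (justified by approximating $\mu_t$ with empirical measures and using $\int|\nabla V|\,d\mu_t<\infty$ to pass to the limit), so it only helps, while $|I_t|\le\|\nabla K\|_\infty\int|\nabla V|\,d\mu_t\le C_V^{1/q}\|\nabla K\|_\infty\|\mu_t\|_{\P_V}$ by Assumption \ref{ass:v}(A2). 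Grönwall then gives $\|\mu_t\|_{\P_V}\le e^{C\|\nabla K\|_\infty t}\|\nu\|_{\P_V}$ with $C=C_V^{1/q}$, independent of $\nu$. Since this estimate holds on every interval of existence, on a putative maximal interval $[0,T^*)$ with $T^*<\infty$ the fields $U[\mu_t]$, and hence the displacements $\sup_x|X(t,x)-x|$, stay bounded, so $X(t,\cdot)$ extends continuously to $t=T^*$ and the local construction restarts there, contradicting maximality. Hence $T^*=\infty$.

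I expect the moment estimate above to be the crux: a crude bound on $K*(\nabla V\mu_t)$ would produce a quadratic (Riccati-type) term in $\|\mu_t\|_{\P_V}$ permitting finite-time blow-up, and it is exactly the positive-definiteness of $K$ that converts it into a good sign --- consistent with only $\|\nabla K\|_\infty$, and not $\|K\|_\infty$, appearing in the exponent of the claimed bound. The remaining technical points --- using Assumption \ref{ass:v}(A2)--(A3) to bound $V\circ X$ and $\nabla V\circ X$ by $V$ so that $\P_V$ is propagated and $U[\mu_t]$ is Lipschitz, and justifying the differentiation under the integral sign and the limiting step for the positive-definiteness --- are routine. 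Note that, unlike the later stability result (Theorem~\ref{thm:stability}), this argument needs no $p$-Wasserstein estimate: the flow maps are compared in the uniform metric $d_Y$.
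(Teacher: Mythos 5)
Your proposal is correct and follows essentially the same route as the paper: a fixed-point argument in the uniform metric on maps close to the identity (the paper applies the contraction mapping theorem directly to the integral operator $\F$ on $S_r=C([0,T_0];Y_r)$, whereas you freeze only the measure and solve the ODE, then contract via Gr\"onwall --- an inessential reorganization), followed by the same a priori $\P_V$-moment estimate in which positive definiteness of $K$ makes the quadratic $\nabla V\cdot K\cdot\nabla V$ term have a good sign, and Gr\"onwall plus continuation to reach all $T>0$. The only cosmetic differences are that the paper bounds $|\nabla V|\le C(1+V)$ via \eqref{eq:asv3} rather than via (A2), and extends the local solution by iterating the construction rather than by a maximal-interval argument.
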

\begin{proof}
We follow the proof of Theorem 1.3.2 in \cite{golse2016dynamics}. The proof of the  theorem consists of  two steps.  

\noindent{\bf Step 1 (local well-posedness):} Fix $r > 0$, and define
\[
Y_r  := \Big\{u \in Y\ |\  \sup_{x\in \R^d} |u(x) - x| \leq r\Big\}.
\]
We prove that there exists $T_0 > 0$ such that the problem \eqref{eq:mfcf} has a unique solution $X(t, x)$ in the set
\[
S_r = C([0, T_0];Y_r)
\]
which is a complete metric space, with metric
\[
d_{S}(u,v) = \sup_{t \in [0,T_0]} d_Y(u(t,\cdot),v(t,\cdot)).
\]
Consider the integral formulation of \eqref{eq:mfcf} given by 
 \be\label{eq:intX}
\bea
& X(t, x, \nu) = x - \int_0^t  \int_{\R^d} \nabla K(X(s, x, \nu) - X(s, x^\prime, \nu)) \nu(dx^\prime)ds\\
& -\int_0^t\int_{\R^d}  K(X(s, x, \nu) - X(s, x^\prime, \nu)) \nabla V(X(s, x^\prime, \nu))\nu(dx^\prime) ds. 
\ena
\en
Let us define the operator $\F : u(t,\cdot) \mapsto \F(u)(t,\cdot)$ by 
$$\bea
\F(u)(t,x) & := x - \int_0^t  \int_{\R^d} \nabla K(u(s,x) - u(s, x^\prime)) \nu(dx^\prime)ds\\
& -\int_0^t\int_{\R^d}  K(u(s, x) - u(s, x^\prime)) \nabla V(u(s, x^\prime))\nu(dx^\prime) ds. 
\ena
$$
Our goal is to show that $\F$ is a contraction in $S_r$, and thus has a unique fixed point.

We first show that $\F$ maps $S_r$ into $S_r$. Checking that $(t,x) \mapsto \F(u)(t,x)$ is continuous is straightforward; we need to establish a bound on $|\F(u)(t,x) - x|$.  If $u\in S_r$, then for any $s\in [0, T_0]$ and $x^\prime \in \R$, 
\be\label{eq:bdu}
|u(s, x^\prime)| \leq |x^\prime| + |u(s, x^\prime) - x^\prime| \leq  |x^\prime| + r.
\en
Then according to Assumptions \eqref{ass:v} (A3), there exists a positive constant $C_r$ such that 
\be\label{eq:v1}
|\nabla V(u(s, x^\prime))| \leq C_r(1 + V(x^\prime)), \quad \forall\;\; x^\prime \in \R^d.
\en
As a consequence, we have 
$$
|\F(u)(t, x) - x| \leq  t \| \nabla K \|_\infty + t C \|K\|_{\infty}\int_{\R^d} (1 + V(x^\prime))\nu(dx^\prime) \leq \tilde{C} t,
$$
where we used the assumption that $\nu \in \P_V$. Therefore,
$$
 \sup_{t\in [0, T_0]} \sup_{x \in \R^d} |\F(u)(t, x)  - x | \leq \tilde{C}T_0 \leq r
$$
if $T_0  \leq r/\tilde{C}$. This shows that $\F$ maps from $S_r$ to $S_r$, if $T_0$ is sufficiently small. 

Next, we show that $\F$ is indeed a contraction on $S_r$. If $u, v \in S_r$, then for any $t \in [0,T_0]$ and $x \in \R^d$,
\begin{align}
& |\F(u)(t, x)  -\F(v)(t, x) | \label{eq:difF}\\
 & \leq \int_0^{T_0} \Big | \int_{\R^d} \nabla K(u(s,x) - u(s, x^\prime)) - \nabla K(v(s,x) - v(s, x^\prime)) \nu(dx^\prime) \Big| ds \nonumber \\
& + \int_0^{T_0}\Big |\int_{\R^d} \Big( K(u(s, x) - u(s, x^\prime)) -  K(v(s,x) - v(s, x^\prime))\Big) \nabla V(u(s, x^\prime))\nu(dx^\prime)\Big | ds \nonumber \\
& + \int_0^{T_0}\Big |\int_{\R^d} K(v(s,x) - v(s, x^\prime)) \Big(\nabla V(u(s, x^\prime)) - \nabla V(v(s, x^\prime))\Big)  \nu(dx^\prime)\Big | ds. \nonumber
\end{align}
The first term on the right side above can be bounded from above by
\[
 T_0 \|D^2 K\|_{\infty}  2 d_S(u , v).
\]
Thanks to \eqref{eq:bdu} and \eqref{eq:asv3}, the second term can be bounded from above by
\begin{align}
 T_0  \|\nabla K\|_{\infty} 2 d_S(u,v) \int_{\R^d}  C_r (1 +  V(x^\prime))\nu(dx^\prime).
\end{align}
To bound the last term on the right side of \eqref{eq:difF}, using \cref{ass:v} (A2) one obtains that
\begin{align}
\Big|\nabla V(u(s, x^\prime)) - \nabla V(v(s, x^\prime))\Big| & \leq \max_{\theta\in[0,1]}|\nabla^2 V(\theta u(s, x^\prime) + (1- \theta)v(s, x^\prime)) |  d_S( u, v)  \nonumber \\
& \leq C_r(1+ V(x^\prime)) d_S( u, v),  \label{eq:diffdF}
\end{align}
where in the last inequality we have used the fact that $\theta u + (1 -\theta)v \in S_r$ so that $\theta u + (1 -\theta)v$ also satisfies the inequality \eqref{eq:bdu}, which enables
us to apply (A3) of \cref{ass:v}. Plugging \eqref{eq:diffdF} into the integral of the last term on the right side of \eqref{eq:difF}, we can bound the last term by
$$
T_0 C_r \|K\|_{\infty} \int_{\R^d} (1 + V(x^\prime))\nu(dx^\prime) d_S(u,v).
$$
Combining the estimates above leads to 
$$
d_S( \F(u), \F(v))  \leq T_0 \left( 2 \|D^2 K\|_{\infty}  + 2 C_r \left(\|K\|_{\infty} + \|\nabla K\|_{\infty}\right) \int_{\R^d} (1 + V(x^\prime))\nu(dx^\prime) \right)  \,d_S(u,v).
$$
which implies that $\F$ is a contraction on $S_r$ when $T_0$ is small enough.
By the contraction mapping theorem, $\F$ has a unique fixed point $X(\cdot, \cdot, \nu) \in S_r$, 
which solves \eqref{eq:intX}.
After defining $\mu_t = X(t, \cdot, \nu)_{\#} \nu$, one sees that $X(t, x, \nu)$ solves \eqref{eq:mfcf} in the small time interval $[0,T_0]$.  

\smallskip 
\noindent{\bf Step 2 (Extension of local solution): } Considering the bounds in the previous step, it is clear that the local solution may be extended beyond time $T_0$ as long as the quantity
\[
\| \mu_t \|_{\P_V} =  \int_{\R^d} \left( 1 + V(X(t, x, \nu)) \right) \nu(dx)
\]
remains finite.  We now establish an a priori bound on this quantity, showing that the local solution may be extended for all $t > 0$. 
\begin{align}
& \partial_t \int_{\R^d} \Big( 1 + V(X(t, x, \nu))\Big) \nu(dx) \\
& = - \int_{\R^d} \int_{\R^d} \nabla V(X(t, x, \nu)\cdot \nabla K(X(t, x, \nu) - X(t, x^\prime, \nu)) \nu(dx^\prime)\nu(dx) \nonumber \\
&  - \int_{\R^d} \int_{\R^d}K(X(t, x, \nu) - X(t, x^\prime, \nu))\nabla V(X(t, x, \nu)\cdot\nabla V(X(t, x^\prime, \nu)\nu(dx^\prime)\nu(dx) \nonumber \\
& \leq \|\nabla K\|_{\infty} \int_{\R^d} |\nabla V(X(t, x, \nu))| \nu(dx). \nonumber \\
& \leq C_r \|K\|_{1, \infty} \int_{\R^d} \Big( 1 + V(X(t, x, \nu))\Big) \nu(dx). \nonumber
\end{align}
The last inequality follows from  \cref{ass:v} (A3) and
the fact that $K$ is positive definite so
that the third line above is non-positive.
As a consequence, 
\be\label{eq:vxbd}
\int_{\R^d} \Big( 1 + V(X(t, x, \nu))\Big) \nu(dx) \leq e^{C_r \|\nabla K\|_{\infty} t }\int_{\R^d} \Big( 1 + V(x)\Big) \nu(dx)
\en
holds for all $t \in [0,T_0]$. With bound, one can iterate the argument to extend the local solution defined on $[0,T_0] \times \R^d$ to all of $[0,\infty) \times \R^d$, so that $\| \mu_t\|_{\P_V} \leq e^{C_r \|\nabla K\|_{\infty} t }\| \nu\|_{\P_V}$ holds for all $t > 0$. Similarly, there is $C > 0$ (depending on $r$) such that $d_Y(X(t,x,\nu), x) \leq C e^{C t}$ holds for all $t \geq 0$. Finally, thanks to the integral formulation \eqref{eq:intX} $\partial_t X$ is continuous on $[0,\infty) \times \R^d$. The proof is complete.  
\end{proof}

\begin{proof}[Proof of Theorem \ref{thm:weak}]
Given $\nu$, let $X(t,x,\nu)$ be the mean field characteristic flow defined in Theorem \ref{theo:mfcf}, and let $\rho_t = X(t,\cdot,\nu)_\# \nu$.   Then this $\rho_t$ is a weak solution to \eqref{eq:mfl} in the sense described above -- this follows immediately from Theorem 5.34 in \cite{Vil03}, for example.

Suppose that $\nu \in \P_p\cap \P_V$. As shown in the proof of Theorem \ref{theo:mfcf}, the map $X(t,x,\nu)$ is an element of the space $Y$ with $d_Y(X(t,x,\nu), x) \leq Ce^{C t}$.  Therefore, since $|X(t,x,\nu)|^p \leq 2^p |x|^p + 2^p d_Y(X(t,x,\nu),x))^p$, we have
\begin{align}
\int_{\R^d} |y|^p \rho_t(dy) & = \int_{\R^d} |X(t,x,\nu)|^p \nu(dx) \leq C \int_{\R^d} |x|^p \nu(dx) + C e^{C t}
\end{align}
for all $t > 0$. Hence $\rho_t \in \P_p \cap \P_V$ for all $t > 0$. 

Uniqueness of the weak solution follows from uniqueness of the mean field characteristic flow, as we now explain. Suppose $q \in C([0,T];\P)$ is any other weak solution. Because $q$ satisfies \eqref{weakMoment1}, the vector field $(t,x) \mapsto U[q_t](x)$ is bounded over $[0,T] \times \R^d$, continuous in $t$ and Lipschitz continuous in $x$. Then we can define a continuous family of maps $\tilde X(t,\cdot,\nu)$ by
\begin{align}
& \partial_t \tilde X = U[q_t](\tilde X) \\
& \tilde X(0,x,\nu) = x.
\end{align}
Because of \eqref{weakMoment1}, the measure $\tilde q_t = \tilde X(t,\cdot,\nu)_\# \nu$ is a weak solution to the linear transport equation
\[
\partial_t \tilde q + \nabla \cdot ( \tilde q U[q_t](x)) = 0
\]
with initial condition $\tilde q_0 = \nu = q_0$.  Uniqueness, of the solution to this linear equation implies that $\tilde q_t = q_t$.  That is, $ \tilde X(t,\cdot,\nu)_\# \nu = q_t$, which means that $\tilde X(t,x,\nu)$ is the mean field characteristic flow for $\nu$.  Uniqueness of the mean-field characteristic flow implies that $\tilde X = X$, hence $q_t = \rho_t$. This proves that the weak solution is unique.
\end{proof}

\subsection{Estimates on the particle system}\label{sec:estimateps}

In this section, we prove Proposition \ref{prop:existence-ps}, showing that the particle system \eqref{eq:ps} is well-posed and that the empirical measure is a weak solution to the mean field PDE. It is useful to introduce the function
$$
H_N(\mathbf{x}) = \frac{1}{N} \sum_{i=1}^N V (x_i) + 1,
$$ 
where $\mathbf{x} = (x_1, x_2, \cdots, x_n)$.

\begin{proof}[Proof of Proposition \ref{prop:existence-ps}]
Since both $K$ and $V$ are $C^2$, it is well-known that the problem \eqref{eq:ps} has a unique solution up to some time $T_0 > 0$. So, we must show that the solution does not blow up at a finite time. We claim that for some constant $C$,
\begin{equation}\label{eq:dbdV}
H_N (\mathbf{x}(t)) \leq H_N (\bx^0)\cdot e^{Ct}.
\end{equation}
This estimate and \cref{ass:v} (A1) imply that 
 $x_i(t)$ remains bounded over $[0, T]$ for any $T > 0$, whence 
  the solution can be extended up to any finite time.  To establish \eqref{eq:dbdV}, we first differentiate
  $V(x_i(t))$ with respect to $t$ and sum over $i$:
 $$\bea
 \partial_t\Big(\frac{1}{N} \sum_{i=1}^N V(x_i(t))\Big) & = -\frac{1}{N^2}\sum_{i,j=1}^N  \nabla K(x_i(t) - x_j(t))\cdot \nabla V(x_i(t)) \\
 & \qquad -\frac{1}{N^2}\sum_{i,j=1}^N K(x_i(t) - x_j(t))  \nabla V(x_i(t)) \cdot \nabla V(x_j(t)).
 \ena$$
 Observe that the second term on the right side of above is non-positive
 since the matrix $\{K(x_i- x_j)\}_{i,j=1}^N$ is positive definite 
 by Assumption \eqref{ass:k}.
 Then it follows from the inequality in Assumptions \eqref{ass:v} (A-2) and the fact that $\nabla K$ is uniformly bounded that there exists a constant $C = C(V, K) > 0$ such that
 $$
 \bea
  \partial_t\Big(\frac{1}{N} \sum_{i=1}^N V(x_i(t))\Big) & \leq \Big|\frac{1}{N^2}\sum_{i,j=1}^N  \nabla K(x_i(t) - x_j(t))\cdot \nabla V(x_i(t))\Big|\\
  & \leq  \frac{C}{N}\sum_{i=1}^N  |\nabla V(x_i(t))| \\
  & \leq C (\frac{1}{N} \sum_{i=1}^N V(x_i(t)) + 1).
 \ena
 $$
 This proves \eqref{eq:dbdV}.

Now having established well-posedness of the finite particle system, it now follows from the definition of the mean field characteristic flow $X(t,x,\mu^N_0)$ that 
\[
x_i(t) = X(t, x_i^0, \mu^N_0)
\]
and
\begin{align} \label{rem:psmfl}
\mu^N_t(dx) = (X(t, \cdot, \mu^N_0))_{\#} \mu^N_0.
\end{align}
In view of the proof of Theorem \ref{thm:weak}, we conclude that $\mu^N_t$ is a weak solution to the mean field PDE \eqref{eq:mfl}.

\end{proof}

The estimate \eqref{eq:dbdV} can be regarded as a discrete analogue of the estimate \eqref{expVbound} established in Theorem \ref{thm:weak}.  We expect that for fixed $N$, $H_N(\bx)$ will remain uniformly bounded in time, although we have been able to prove this only with some further restrictions on $V$ and $K$, as the next lemma states.

\begin{lemma}\label{lem:bddintime}
Fix $N \geq 1$. Suppose that for some $p \geq 2$ and $m, R > 0$, $V(x) = m |x|^p$ if $|x| > R$.  Suppose also that $K(0) > 0$ and that $|x|^{p-1}K(x)$ is bounded. Then $H_N(x(t))$ is uniformly bounded for $t \in [0,\infty)$. 
\end{lemma}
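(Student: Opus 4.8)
The plan is to sharpen the differential identity for $\tfrac1N\sum_i V(x_i(t))$ derived in the proof of Proposition~\ref{prop:existence-ps} and to show its right-hand side is negative whenever $M(t) := \max_{1\le i\le N}|x_i(t)|$ is large. Abbreviating $w_i := \nabla V(x_i(t))$, that identity is
\[
\partial_t\Big(\tfrac1N\sum_{i=1}^N V(x_i)\Big) = -\frac1{N^2}\sum_{i,j}\nabla K(x_i-x_j)\cdot w_i - \frac1{N^2}\sum_{i,j}K(x_i-x_j)\,w_i\cdot w_j =: T_1 + T_2 .
\]
Since $V(x)=m|x|^p$ for $|x|>R$ and $V$ is smooth, $|\nabla V(x)|\le C(1+|x|^{p-1})$, hence $|T_1|\le \tfrac{\|\nabla K\|_\infty}{N}\sum_i|w_i|\le C(1+M^{p-1})$. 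The crux --- the only place the two extra hypotheses on $V$ and $K$ enter --- is the claim that, once $M$ exceeds a threshold $L$ depending on $N,K,V,R$,
\[
\sum_{i,j}K(x_i-x_j)\,w_i\cdot w_j \;\ge\; \tfrac12 K(0)\,|w_k|^2 \;=\; \tfrac12 K(0)(mp)^2 M^{2(p-1)},
\]
where $k$ is any index with $|x_k| = M$ (so $M>R$, i.e.\ $L>R$). Granting this, $T_2\le -\tfrac{K(0)(mp)^2}{2N^2}M^{2(p-1)}$, and since $2(p-1)>p-1$ we get $\partial_t(\tfrac1N\sum_i V(x_i))\le C(1+M^{p-1})-cM^{2(p-1)}\le 0$ whenever $M\ge L$ (enlarging $L$ if necessary). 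Note that positive definiteness of $K$ by itself only yields $T_2\le 0$, which cannot beat the $O(M^{p-1})$ growth of $|T_1|$; the extra decay and nondegeneracy of $K$ are genuinely needed.

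To establish the quadratic-form bound I would isolate the diagonal, which is $K(0)\sum_i|w_i|^2\ge K(0)|w_k|^2=K(0)(mp)^2M^{2(p-1)}$ (using $K(0)>0$ and $|x_k|=M>R$), and bound the off-diagonal from below. Pairs $i\ne j$ with $w_i\cdot w_j\ge 0$ are dropped. For a pair with $w_i\cdot w_j<0$: if $|x_i|,|x_j|>R$, then $w_i\cdot w_j=(mp)^2|x_i|^{p-2}|x_j|^{p-2}\,x_i\cdot x_j<0$ forces $x_i\cdot x_j<0$, so $|x_i-x_j|^2\ge|x_i|^2+|x_j|^2\ge\max(|x_i|,|x_j|)^2$; combined with $|x|^{p-1}|K(x)|\le C_K$ this gives $|K(x_i-x_j)\,w_i\cdot w_j|\le C_K(mp)^2\min(|x_i|,|x_j|)^{p-1}\le C_K(mp)^2M^{p-1}$. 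A pair in which some $|x_\ell|\le R$ is bounded crudely by $\|K\|_\infty|w_i||w_j|\le C(1+M^{p-1})$, since $\nabla V$ is bounded on $\{|x|\le R\}$. Summing the at most $N^2$ off-diagonal terms, the off-diagonal part is $\ge -\tilde C(1+M^{p-1})$, whence $\sum_{i,j}K(x_i-x_j)w_i\cdot w_j\ge K(0)(mp)^2M^{2(p-1)}-\tilde C(1+M^{p-1})\ge\tfrac12 K(0)(mp)^2M^{2(p-1)}$ once $M$ is large, which is the claim.

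It remains to turn ``$M(t)$ large'' into a statement about $\phi(t):=\tfrac1N\sum_i V(x_i(t))$ itself. Because $V(x)=m|x|^p$ for $|x|>R$ and $V$ is bounded on $\{|x|\le R\}$, one has $\phi(t)\le mM(t)^p+C_R$, so there is a constant $\mathcal V_\ast$ with the property that $\phi(t)>\mathcal V_\ast\Rightarrow M(t)>L\Rightarrow\phi'(t)\le 0$. A standard continuity argument --- looking at the last time $s\le t$ at which $\phi(s)=\max(\phi(0),\mathcal V_\ast)$ --- then yields $\phi(t)\le\max(\phi(0),\mathcal V_\ast)$ for all $t\ge0$, i.e.\ $H_N(x(t))\le\max(H_N(x(0)),\mathcal V_\ast+1)$, as claimed. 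The hard part throughout is the quadratic-form lower bound; the structural fact that makes it work is that cancellation in $\sum_{i,j}K(x_i-x_j)w_i\cdot w_j$ is possible only between particles that lie far apart --- exactly where $K$ is small by the decay assumption --- while $K(0)>0$ keeps the diagonal from degenerating.
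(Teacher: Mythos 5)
Your proposal is correct, and it reaches the conclusion by a genuinely different route than the paper's. Both arguments start from the same identity $\partial_t H_N=-(S_1+S_2)/N^2$ and both exploit exactly the two extra hypotheses --- $K(0)>0$ for the diagonal of the quadratic form $S_2$ and the decay $|x|^{p-1}K(x)\in L^\infty$ for the cross terms --- but the execution differs. The paper splits, for each $i$, the indices $j$ into the aligned set $A(x_i)=\{j:\nabla V(x_i)\cdot\nabla V(x_j)\ge |\nabla V(x_i)|^2/2\}$ and its complement, controls the complement via the mean value theorem for $\nabla V$ and the bound $K(x_i-x_j)\,|x_i-x_j|\int_0^1\|D^2V(sx_j+(1-s)x_i)\|\,ds\le C(1+|\nabla V(x_i)|^{\alpha})$ with $\alpha=(p-2)/(p-1)$, and after Young's inequality arrives at the dissipative differential inequality $\partial_t H_N\le -\frac{C_5}{N}H_N+C_6N^{3/(1-\alpha)-2}$, which gives uniform boundedness (indeed exponential relaxation to an explicit $N$-dependent level) and retains control of the full sum $\sum_i|\nabla V(x_i)|^2$. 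You instead keep only the diagonal term of the farthest particle, use the radial form of $V$ outside $B_R$ to note that anti-aligned gradients force $x_i\cdot x_j<0$, hence $|x_i-x_j|\ge\max(|x_i|,|x_j|)$, so the decay of $K$ caps each bad off-diagonal pair at $O(M^{p-1})$, and you close with a barrier/continuity argument showing $\phi'\le 0$ once $\phi$ exceeds a threshold. Your version is more elementary and fully sufficient for the statement (where $N$ is fixed), though its threshold and final bound depend on $N$ in an uncontrolled way and it leans on $\nabla V$ being radial, which the paper's argument does not. One shared caveat, not a gap you introduced: your step of discarding off-diagonal pairs with $w_i\cdot w_j\ge 0$, like the paper's bound $\sum_{j\in A(x_i)}K(x_i-x_j)\ge K(0)$, implicitly uses pointwise nonnegativity of $K$, which is stronger than the stated positive definiteness (but holds for the Gaussian kernels the paper has in mind).
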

\begin{proof}
Observe that $\partial_t H_N(x(t)) = -(S_1 + S_2)/N^2$, where
\begin{align}
S_1 & =\sum_{i,j=1}^N  \nabla K(x_i(t) - x_j(t))\cdot \nabla V(x_i(t)) \nonumber, \\
S_2 & = \sum_{i,j=1}^N K(x_i(t) - x_j(t))  \nabla V(x_i(t)) \cdot \nabla V(x_j(t)). \nonumber 
\end{align}
Because $K$ is positive definite, we know that $S_2 \geq 0$. We wish to bound $S_2$ from below.   For $y \in \R^d$, let us define
\[
A(y) = \left \{ j \in \{1,\dots,N\} \;|\; \quad \nabla V(y) \cdot \nabla V(x_j) \geq |V(y)|^2/2 \right \}.
\]
We write $S_2$ as
\begin{align}
S_2 & = \sum_{i=1}^N \sum_{j \in A(x_i)} K(x_i(t) - x_j(t))  \nabla V(x_i(t))\cdot \nabla V(x_j(t)) \nonumber  \\ 
& \qquad \qquad + \sum_{i=1}^N \sum_{j \notin A(x_i)} K(x_i(t) - x_j(t))  \nabla V(x_i(t))\cdot \nabla V(x_j(t))  \nonumber  \\ 
& \geq   \sum_{i=1}^N \frac{1}{2} | \nabla V(x_i(t))|^2  \sum_{j \in A(x_i)} K(x_i(t) - x_j(t)) \nonumber  \\
  & \qquad \qquad + \sum_{i=1}^N \sum_{j \notin A(x_i)} K(x_i(t) - x_j(t))  \nabla V(x_i(t)) \cdot (\nabla V(x_j) - \nabla V(x_i(t))). \nonumber
\end{align}
Since
\[
|\nabla V(x_j) - \nabla V(x_i)| \leq  \int_0^1 \norm{D^2 V(s x_j + (1 - s)x_i)}\,ds |x_j - x_i|,
\]
we have
\begin{align}
S_2 & \geq  \sum_{i=1}^N \frac{1}{2} | \nabla V(x_i(t))|^2  \sum_{j \in A(x_i)} K(x_i(t) - x_j(t))  \nonumber\\
&\qquad \qquad   - \sum_{i=1}^N |\nabla V(x_i(t))| \sum_{j \notin A(x_i)}  |x_j - x_i| K(x_i - x_j) R(x_i,x_j),\nonumber
\end{align}
where 
\[
R(a,b) = \int_0^1 \norm{D^2 V(s a + (1 - s)b)}\,ds, \quad a,b \in \R^d.
\]
By our assumptions on $V$, we have 
\begin{align}
R(x_j,x_i) & \leq C + \max( \norm{D^2V(x_j)}, \norm{D^2V(x_i)}) \nonumber \\
& \leq C (1 + |x_j|^{p-2} + |x_i|^{p-2}) \nonumber \\
& \leq C (1 + |x_j - x_i|^{p-2} + |x_i|^{p-2}). \nonumber
\end{align}
(As elsewhere in the paper, the constant $C$ may change from line to line, here). By the assumptions on $V$, there is $\alpha = (p-2)/(p-1) \in [0,1)$ such that $|x_i|^{p-2} \leq C (1 + |\nabla V(x_i)|^\alpha$.  Also, $K(x_i -x_j)| |x_j - x_i|^{p-1}$ is bounded, by assumption.  Consequently,
\begin{align}
|K(x_i -x_j)| |x_j - x_i| R(x_j,x_i) &  \leq C (1 + |\nabla V(x_i)|^\alpha ), \quad \forall \;\; j \notin A(x_i). \label{KRassump}
\end{align}
Then
\begin{align}
S_2 & \geq  \sum_{i=1}^N \frac{1}{2} | \nabla V(x_i(t))|^2  \sum_{j \in A(x_i)} K(x_i(t) - x_j(t))  \nonumber \\
&\qquad \qquad -\sum_{i=1}^N |\nabla V(x_i(t))| (1 + |\nabla V(x_i(t))|^\alpha ) C N. \nonumber
\end{align}
Since $i \in A(x_i)$, the trivial bound 
\[
\sum_{j \in A(x_i)} K(x_i(t) - x_j(t)) \geq K(0) > 0
\]
always holds. Applying H\"older's inequality with exponents $p=2$ and with $(p,p^*) = (2/(1 + \alpha), 2/(1 - \alpha))$ we obtain
\begin{align}
S_2 & \geq   \frac{1}{2} \sum_{i=1}^N | \nabla V(x_i(t))|^2 K(0) - \epsilon \sum_i |\nabla V(x_i(t))|^2 - \frac{1}{\epsilon } C^2 N^3\nonumber  \\
& \quad \quad - \delta \sum_{i=1}^N| \nabla V(x_i(t))|^2 - \frac{1}{\delta^{(1+\alpha)/(1 - \alpha)}} N (CN)^{2/(1 - \alpha)}.\nonumber
\end{align}
In particular, there are constants $C_1, C_2 > 0$ (dependent on $K(0)$ and $\alpha$) so that
\begin{align}
S_2 & \geq C_1 \sum_{i=1}^N | \nabla V(x_i(t))|^2 - C_2 N^{3/(1 - \alpha)}. \nonumber
\end{align}

The sum $S_1$ is bounded by
\begin{align}
|S_1| & = \Big|\sum_{i,j=1}^N  \nabla K(x_i(t) - x_j(t))\cdot \nabla V(x_i(t))\Big| \nonumber \\
& \leq \frac{\epsilon}{2} \sum_{i=1}^N |\nabla V(x_i)|^2 + \frac{1}{2\epsilon} \sum_{i} |\sum_{j} \nabla K(x_i - x_j)|^2.\nonumber
\end{align}
Combining all these estimates, we obtain
\[
\partial_t H_N(x(t)) \leq - \frac{C_3}{N^2} \sum_{i=1}^N | \nabla V(x_i(t))|^2 + C_4 N^{3/(1 - \alpha) - 2}. 
\]
Since $|\nabla V|^2 \geq C( V + 1)- C'$ holds for all $x$, for some positive constants $C, C'$, this implies
\begin{align}
\partial_t H_N(x(t)) & \leq - \frac{C_5}{N^2} \sum_{i=1}^N (V(x_i(t) + 1)  + C_6 N^{3/(1 - \alpha) - 2}, \nonumber \\
& =  - \frac{C_5}{N} H_N(x(t))  + C_6 N^{3/(1 - \alpha) - 2},\nonumber
\end{align}
which implies that $H_N$ is uniformly bounded in $t$, for $N$ fixed.

\end{proof}

\section{Regularity for the mean-field PDE} \label{sec:wellpose}
In this section we prove Proposition \ref{prop:inteq} under the assumption that the initial
distribution $\nu \in \P_V$ has
a density $\rho_0 \in \sY^1_{k, V}$ with some fixed $k \geq 2$ and that 
the kernel $K$ is $(k+2)$ times differentiable with bounded derivatives.

\begin{proof}[Proof of Proposition \ref{prop:inteq}]
By Theorem \ref{thm:weak}, we know that $\rho_t \in C([0,T]; \mathscr{P}_V)$ satisfies
\begin{equation}
\| \rho_t \|_{\P_V} \leq e^{C_1 t} \| \nu \|_{\P_V}, \quad t \geq 0. \label{eq:vrho}
\end{equation}
Consequently, the vector field $(t,x) \mapsto U[\rho_t](x)$ (defined at \eqref{Uvecdef}) satisfies
\begin{align}
|U[\rho_t](x)| & \leq  \left( \| D K \|_\infty + \| \nabla K \|_\infty \right) e^{C_1 t} \| \rho_0 \|_{\P_V}, \nonumber \\
|D_x^j U[\rho_t](x)| & \leq  \left( \| D^{j+1} K \|_\infty + \| D^j K \|_\infty \right) e^{C_1 t} \| \rho_0 \|_{\P_V}, j= 1,2,\cdots, k+1.
\end{align}
Thus $U(t, x) \in C([0,T]; C_B^{k+1}(\R^d))$ where we recall that 
$C_B^{k+1}(\R^d)$ is the space of continuous functions with
bounded $(k+1)$-th order derivatives. 
Let $\Phi_t(x) = X(t,x,\nu)$ denote the characteristic flow (Definition \ref{def:mfcf}). Since $\Phi_t$ satisfies the ODE system $\frac{d}{dt} \Phi_t(x) = U[\rho_t](\Phi_t(x))$, it follows from standard theory that the maps $x \mapsto \Phi_t$ and its inverse $\Phi_t^{-1}$ are both $C^k$ maps (e.g. see Chapter 2 of \cite{Teschl12}). Therefore, if $\rho_0$ has a density, then $\rho_t$ also has a density. In fact, $\rho_t(x)$ is given by
\[
\rho_t(x) = (\Phi_t)_\# \rho_0 = \rho_0(\Phi^{-1}_t x) \exp \left(-\int_0^t (\nabla_x \cdot U[\rho_s])( \Phi_s \circ \Phi_t^{-1} x)\,ds \right)
\]
Moreover, since $\rho$ satisfies 
$$
\partial_t \rho_t = - \nabla \cdot(\rho_t U[\rho_t])
$$
with the vector field $U(t, x) \in C([0,T]; C_B^{k+1}(\R^d))$, it follows from \cite[Lemma 2.8]{laurent2007local}
that $\rho \in C([0,T]; H^k(\R^d))$ for any $T > 0$ and $\partial_t \rho \in C([0,T]; H^{k-1}(\R^d))$.

It remains to prove that $\rho(t, \cdot) \in W^{1,1}_V$ for every $t\in [0, T]$ 
and that it satisfies the a priori estimate \eqref{eq:apriori1}. First, we show that $\rho(t,\cdot) \in W^{1,1}_V$. To see this, we differentiate both sides of 
\begin{equation}
\partial_t \rho = - \nabla \cdot (\rho U[\rho_t]) \label{eq:iteration} 
\end{equation}
with respect to $x_i$ to get the following equation for $\partial_{x_i} \rho$ 
\begin{equation}
\partial_t (\partial_{x_i} \rho) = - \nabla  \cdot ((\partial_{x_i}\rho) U[\rho_t]) - \nabla  \cdot (\rho ((\partial_{x_i} U[\rho_t])) 
\end{equation}
Now given $\delta > 0$, we define the one dimensional function
\be\label{eq:phidelta}
\phi_\delta(x) = \sqrt{|x|^2 + \delta}.
\en
It is clear that $\phi_\delta(x) \gt |x|$ as $\delta \gt 0$ and that 
$\sup_{x\in \R}|\phi_\delta^\prime(x)| \leq 1$. Then $\phi_\delta(\partial_{x_i} \rho)$ satisfies:
\begin{equation}
\partial_t \phi_\delta(\partial_{x_i} \rho) = - \phi_\delta'(\partial_{x_i} \rho) \nabla  \cdot ((\partial_{x_i}\rho) U[\rho_t]) - \phi_\delta'(\partial_{x_i} \rho) \nabla  \cdot (\rho ((\partial_{x_i} U[\rho_t])) 
\end{equation}
Notice that since $\rho \in C([0,T]; H^k(\R^d))$ for any $T > 0$ and
$\partial_t \rho \in C([0,T]; H^{k-1}(\R^d))$, the above equation holds in the space 
$C([0,T]; L^2(\R^d))$. Let $\eta_R$ be a smooth cut-off function on $\R^d$ such that
\[
\eta_R(x) = \eta(x/R), \quad \text{ where } \eta \in C^\infty_c(\R^d) 
\text{ and } \eta(x) = 1 \text{ for } |x| \leq 1, \eta(x) = 0 \text{ for } 
|x| \geq 2.
\]
Next, we multiply the above equation with $(1 + V)\eta_R$, and then integrate on the whole space to get 
\begin{eqnarray}
\partial_t \int_{\R^d} (1 + V(x)) \eta_R(x) \phi_\delta(\partial_{x_i} \rho) \,dx & = & - \int_{\R^d} (1 + V(x)) \eta_R(x)  \nabla (\phi_\delta(\partial_{x_i} \rho)) \cdot U[\rho_t] \,dx \nonumber \\
&& - \int_{\R^d} (1 + V(x)) \eta_R(x)  \phi_\delta'(\partial_{x_i} \rho)  \partial_{x_i}\rho \nabla \cdot U[\rho_t] \,dx \nonumber \\
& & - \int_{\R^d} (1 + V(x)) \eta_R(x)  \phi_\delta'(\partial_{x_i} \rho) \nabla \rho \cdot (\partial_{x_i} U[\rho_t])  \,dx \nonumber \\
& & - \int_{\R^d} (1 + V(x)) \eta_R(x)  \phi_\delta'(\partial_{x_i} \rho) \rho \nabla \cdot (\partial_{x_i} U[\rho_t])  \,dx \nonumber \\
& =: & \sum_{j=1}^4 I_j(\delta, R). \label{eq:phideltav}
\end{eqnarray}
Using the fact that $|\phi^\prime_\delta| \leq 1$ and 
that $\eta_R$ is uniformly bounded, we have that
$$
I_j(\delta, R) \leq C(V)\|K\|_{3,\infty} \|\rho(t,\cdot)\|_{W^{1,1}_V}
\|\rho(t,\cdot)\|_{L^1_V} \text{ for } j=2,3,4.
$$
For $I_1(\delta, R)$, using integration by parts and the assumption \eqref{eq:asv3} one obtains that
$$
\bea
I_1(\delta, R) & = -\int_{\R^d}  \phi_\delta(\partial_{x_i} \rho) \nabla \cdot \Big((1 + V)\eta_R\  (\nabla K \ast \rho + \nabla K \ast  (\nabla V\rho))\Big)\\
& \leq C(V)\|K\|_{3,\infty} \|\rho(t,\cdot)\|_{L^1_V} \int_{\R^d} (1 + V) \phi_\delta(\partial_{x_i} \rho)  .
\ena
$$
Consequently, letting $\delta \gt 0$ and $R\gt\infty$, we obtain from \eqref{eq:phideltav} and \eqref{eq:vrho} that 
\begin{eqnarray}
\partial_t \|(1+V)\nabla \rho(t,\cdot)\|_{L^1} & \leq & C(V)\|K\|_{3,\infty}\|\rho(t,\cdot)\|_{L^1_V} \|(1+V)\nabla \rho(t,\cdot)\|_{L^1}. \nonumber \\
& \leq & C e^{C_1 t} \| \nu \|_{\P_V} \|(1+V)\nabla \rho(t,\cdot)\|_{L^1}. 
\end{eqnarray}
This implies
\be
\|\rho(t, \cdot)\|_{W^{1,1}_V} \leq \exp\Big(C_2 (e^{C_1 t} - 1) \| \nu \|_{\P_V} \Big) \|\rho_0\|_{W^{1,1}_V}.
\en

\commentout{

$$
\bea
 \partial_t (\partial_{x_i} \rho) & = \nabla (\partial_{x_i} \rho) \cdot \Big(\nabla K \ast \rho + K \ast (\nabla V \rho)\Big)
  + \nabla \rho \cdot \Big(\nabla \partial_{x_i} K \ast \rho + \partial_{x_i} K \ast (\nabla V \rho)\Big)\\
  & + \partial_{x_i} \rho \cdot \Big(\Delta K \ast \rho+ \nabla K \ast (\nabla V \rho)\Big) + \rho \Big(\Delta \partial_{x_i} K \ast \rho + \nabla \partial_{x_i} K \ast (\nabla V \rho)\Big).
\ena
$$

Notice that since $\rho\in \sX_k(0,T)$ with $k\geq 2$ the above equation holds in the space $L^\infty((0,T); L^2(\R^d))$.
Now given $\delta > 0$, we define the one dimensional function
\be
\phi_\delta(x) = \sqrt{|x|^2 + \delta}.
\en
It is clear that $\phi_\delta(x) \gt |x|$ as $\delta \gt 0$ and that 
$\sup_{x\in \R}|\phi_\delta^\prime(x)| \leq 1$. Then $\phi_\delta(\partial_{x_i} \rho)$ satisfies the following equation in the weak sense:
$$
\bea
&  \partial_t (\phi_\delta(\partial_{x_i} \rho))  = \phi_\delta^\prime(\partial_{x_i} \rho)\nabla (\partial_{x_i} \rho) \cdot \Big(\nabla K \ast \rho + K \ast (\nabla V \rho)\Big)\\
 & + \phi_\delta^\prime(\partial_{x_i} \rho) \nabla \rho \cdot \Big(\nabla \partial_{x_i} K \ast \rho + \partial_{x_i} K \ast (\nabla V \rho)\Big)
   + \phi_\delta^\prime(\partial_{x_i} \rho) \partial_{x_i} \rho \cdot \Big(\Delta K \ast \rho + \nabla K \ast (\nabla V \rho)\Big) \\
   & + \phi_\delta^\prime(\partial_{x_i} \rho) \rho \Big(\Delta \partial_{x_i} K \ast \rho+ \nabla \partial_{x_i} K \ast (\nabla V \rho)\Big).
\ena
$$
Let $\eta_R$ be a smooth cut-off function on $\R^d$ such that
\[
\eta_R(x) = \eta(x/R), \quad \text{ where } \eta \in C^\infty_c(\R^d) 
\text{ and } \eta(x) = 1 \text{ for } |x| \leq 1, \eta(x) = 0 \text{ for } 
|x| \geq 2.
\]
Next, we multiply the above equation
with $(1 + V)\eta_R$, and then integrate on the whole space to get 
\be
\bea
& \partial_t \int_{\R^d} (1 + V)\eta_R\ \phi_\delta(\partial_{x_i} \rho) dx = 
\int_{\R^d} (1 + V)\eta_R\ \nabla (\phi_\delta(\partial_{x_i} \rho))  \cdot \Big(\nabla K \ast \rho + K \ast (\nabla V \rho)\Big) dx\\
& \qquad + \int_{\R^d} (1 + V)\eta_R\ \phi_\delta^\prime(\partial_{x_i} \rho) \nabla \rho \cdot \Big(\nabla \partial_{x_i} K \ast \rho + \partial_{x_i} K \ast (\nabla V \rho)\Big) dx\\
& \qquad +  \int_{\R^d} (1 + V)\eta_R\ \phi_\delta^\prime(\partial_{x_i} \rho) \partial_{x_i} \rho \cdot \Big(\Delta K \ast \rho + \nabla K \ast (\nabla V \rho)\Big)  dx\\
& \quad +  \int_{\R^d} (1 + V)\eta_R\ \phi_\delta^\prime(\partial_{x_i} \rho) \rho \Big(\Delta \partial_{x_i} K \ast \rho + \nabla \partial_{x_i} K \ast (\nabla V \rho)\Big)dx =: \sum_{j=1}^4 I_j(\delta, R).
\ena
\en
}

Finally we derive an $H^k$-estimate for the solution. For doing so, let $\balpha$ be a multi-index such that $|\balpha| \leq k$. Taking $\partial^{\balpha}$ on the both sides of \eqref{eq:iteration}, multiplying the resulting equation with $\partial^{\balpha} \rho$ and then integrating gives
\be\label{eq:hk}
\bea
& \frac{1}{2} \partial_t \|\partial^{\balpha} \rho\|^2_{L^2(\R^d)} = \int_{\R^d} \partial^{\balpha} \Big(\nabla \rho \cdot \nabla K \ast \rho\Big)  \partial^{\balpha} \rho dx+ \int_{\R^d} \partial^{\balpha} \Big(\rho \Delta K \ast \rho \Big)  \partial^{\balpha} \rho dx\\
&  +  \int_{\R^d} \partial^{\balpha} \Big( \nabla \rho \cdot K \ast (\nabla V\rho) \Big) \partial^{\balpha} \rho dx + \int_{\R^d}  \partial^{\balpha}\Big(\rho  \nabla K\ast (\nabla V\rho)\Big) \partial^{\balpha} \rho dx
 =: \sum_{i=1}^4 J_i.
\ena
\en
Note that by Leibniz rule and integration by parts, 
$$
\bea
& J_1 = \int_{\R^d} \sum_{\bbeta < \balpha} \partial^{\bbeta} (\nabla \rho) \cdot  (\nabla ( \partial^{\balpha - \bbeta} K) \ast \rho) \partial^\alpha \rho dx - \frac{1}{2} \int_{\R^d} |\partial^\alpha \rho|^2 \Delta K \ast \rho dx\\
& \leq \sum_{\bbeta < \balpha } \|\partial^{\bbeta} (\nabla \rho)\|_{L^2} 
\|\nabla ( \partial^{\balpha - \bbeta} K) \ast \rho\|_{L^\infty} 
\|\partial^{\balpha} \rho\|_{L^2} + \frac{1}{2} \|\partial^{\balpha} \rho\|_{L^2}^2 \| \Delta K \ast \rho\|_{L^1}\\
& \leq C(K) \|\rho\|_{H^k(\R^d)}^2 \|\rho\|_{L^1}.
\ena
$$
Similarly, we have for $i=2,3,4$, 
$$
J_i \leq C(V, K) \|\rho\|_{H^k(\R^d)}^2 \|\rho\|_{L^1_V}.
$$
Plugging the estimates into \eqref{eq:hk} and using \eqref{eq:vrho}, we obtain by summing over $\balpha$ with $|\balpha| \leq k$ that
$$
\partial_t \|\rho(t,\cdot)\|_{H^k(\R^d)} \leq Ce^{C_1 t} \|\nu\|_{\mathscr{P}_V}\|\rho(t,\cdot)\|_{H^k(\R^d)}
$$
which implies 
\be\label{eq:hk2}
\|\rho(t, \cdot)\|_{H^k(\R^d)} \leq \exp\Big(C_2 (e^{C_1 t} - 1) \| \nu \|_{\P_V} \Big) \|\rho_0\|_{H^k(\R^d)}.
\en
The estimate follows from \eqref{eq:apriori1} and \eqref{eq:hk2}. This finishes the proof of the proposition.

\end{proof}

\section{Stability estimate}\label{sec:cps}

In this section we prove \cref{thm:stability} using Dobrushin's coupling argument, following Theorem 1.4.1 of \cite{golse2016dynamics}.

\begin{proof}[Proof of \cref{thm:stability}] Recall that $p = q^\ast = \frac{q}{q-1} $. First by the assumption that $\|\nu_i\|_{\P_p} \leq R < \infty$ and the fact that $\P_p\subset\P_V$ thanks to \cref{vpgrowth}, we know that  there exists $C(R) > 0$ such that
\be\label{eq:nuiPv}
\|\nu_i\|_{\P_V} \leq C(R) < \infty.
\en
By the proof of Theorem \ref{thm:weak} and Definition \ref{def:mfcf} of the mean field characteristic flow, we know that the weak solutions $\mu_{i,t}$ take the form
\[
\mu_{i,t} = (X(t, \cdot, \nu_i))_{\#} \nu_i, \quad i = 1,2
\]
So, we must estimate $ \W_p^p(\mu_{1,t}, \mu_{2,t})$ in terms of $ \W_p^p(\nu_1, \nu_2)$. Let $\pi^0$ be a coupling measure between the probability measures $\nu_1 $ and $ \nu_2$. Define for $\delta > 0$, $\phi_\delta(x) = \frac{1}{p}(|x|^2 + \delta)^{p/2}$ to be an approximation to $\frac{1}{p}|x|^p$, 
 Given any two points $x_1, x_2\in \R^d$, we have from \eqref{eq:mfcf} that
 $$
 \bea
 & \partial_t \phi_\delta\Big(X(t, x_1, \nu_1) - X(t, x_2, \nu_2)\Big) = - \nabla \phi_\delta \Big(X(t, x_1, \nu_1) - X(t, x_2, \nu_2)\Big)\times \\
 &  \times \bigg\{\Big( \int_{\R^d} \nabla K(X(t, x_1, \nu_1) - X(t, x_1^\prime, \nu_1)) \nu_1(dx_1^\prime)\\
 &  \qquad\qquad -  \int_{\R^d} \nabla K(X(t, x_2, \nu_2) - X(t, x_2^\prime, \nu_2)) \nu_2(dx_2^\prime)\Big)\\
 & \quad - \Big(\int_{\R^d}  K(X(t, x_1, \nu_1) - X(t, x_1^\prime, \nu_1)) \nabla V(X(t, x_1^\prime, \nu_1))\nu_1(dx_1^\prime)\\
 & \qquad\qquad - \int_{\R^d}  K(X(t, x_2, \nu_2) - X(t, x_2^\prime, \nu_2)) \nabla V(X(t, x_2^\prime, \nu_2))\nu_2(dx_2^\prime) \Big) \bigg\}\\
 &= -\nabla \phi_\delta \Big(X(t, x_1, \nu_1) - X(t, x_2, \nu_2)\Big)\times \\
 &\times \bigg\{   \int_{\R^{2d}} \Big(\nabla K(X(t, x_1, \nu_1) - X(t, x_1^\prime, \nu_1)) \\
 &  \qquad\qquad
 - \nabla K(X(t, x_2, \nu_2) - X(t, x_2^\prime, \nu_2))\Big) \pi^0(dx_1^\prime dx_2^\prime)\\
 & \quad +  \int_{\R^{2d}} \Big(K(X(t, x_1, \nu_1) - X(t, x_1^\prime, \nu_1))
 -  K(X(t, x_2, \nu_2) - X(t, x_2^\prime, \nu_2))\Big)\\
 & \qquad \qquad \times \nabla V(X(t, x_1^\prime, \nu_1))  \pi^0(dx_1^\prime dx_2^\prime)\\
 & \quad +\int_{\R^{2d}}  K(X(t, x_2, \nu_2) - X(t, x_2^\prime, \nu_2))\\
 &   \qquad\qquad   \times\Big( \nabla V(X(t, x_1^\prime, \nu_1)) - \nabla V(X(t, x_2^\prime, \nu_2)) \Big)\pi^0(dx_1^\prime dx_2^\prime) 
 \Big)\bigg\}\\
 & =:  I_1 + I_2 + I_3.
 \ena
 $$
 Below we bound $I_i$ individually. First, it is important to notice that 
 $$
| \nabla \phi_\delta (x)|  = |(|x|^2 + \delta)^{p/2 - 1} x| \leq  |x|^{p-1}.
 $$
  Then thanks to Assumption \eqref{ass:k} on $K$ and the fact that
  the inclusion $L^p \xhookrightarrow{} L^1$ is bounded for $p>1$, we have  
 $$
 \bea
 &I_1 \leq \|K\|_{2,\infty} \Big|X(t, x_1, \nu_1) - X(t, x_2, \nu_2)\Big|^{p}  \\
 & \qquad + \|K\|_{2,\infty}\Big|X(t, x_1, \nu_1) - X(t, x_2, \nu_2)\Big|^{p-1}\\
 &\qquad \qquad \times 
 \Big( \int_{\R^{2d}} \Big|X(s, x_1^\prime, \nu_1) - X(s, x_2^\prime, \nu_2)\Big|^p \pi(dx_1^\prime d x_2^\prime)\Big)^{1/p} .
 \ena
 $$
 For $I_2$, it follows from Assumption \ref{ass:v} (A2)-(A3) and H\"older's inequality that 
 $$
 \bea
  & I_2 \leq \|K\|_{1,\infty} \Big|X(t, x_1, \nu_1) - X(t, x_2, \nu_2)\Big|^{p}\times \int_{\R^{d}} \Big| \nabla V(X(t, x_1^\prime, \nu_1)) \Big|\nu_1(dx_1^\prime)  \\
 & +   \|K\|_{1,\infty}\Big|X(t, x_1, \nu_1) - X(t, x_2, \nu_2)\Big|^{p-1}\times  \int_{\R^{2d}}\Big|X(t, x_1^\prime, \nu_1) - X(t, x_2^\prime, \nu_2)\Big| \\
 & \qquad\qquad \times\Big| \nabla V(X(t, x_1^\prime, \nu_1)) \Big|\pi^0(dx_1^\prime d x_2^\prime)\\
& \leq  C_V \|K\|_{1,\infty} \Big|X(t, x_1, \nu_1) - X(t, x_2, \nu_2)\Big|^{p} \int_{\R^d} (1 + V(X(t, x_1^\prime, \nu_1))) \nu_1(dx_1^\prime) \\
&  + \|K\|_{1,\infty} \Big|X(t, x_1, \nu_1) - X(t, x_2, \nu_2)\Big|^{p-1}\\
& \qquad \qquad \times \Big( \int_{\R^{2d}} \Big|X(t, x_1^\prime, \nu_1) - X(t, x_2^\prime, \nu_2)\Big|^p \pi^0(dx_1^\prime d x_2^\prime)\Big)^{1/p}\\
&   \qquad \qquad \times 
\Big( \int_{\R^{2d}} \Big|  \nabla V(X(t, x_1^\prime, \nu_1))  \Big|^{q}
\mu^0(dx_1^\prime )\Big)^{1/q}. 
  \ena
  $$
  Observe that the integrals involving $V$ on the right side of above can be bounded in exactly the same way as \eqref{eq:vxbd}. Hence we can obtain
  $$
  \bea
  & I_2  \leq C_V e^{Ct} \|K\|_{1,\infty}  \|\mu^0\|_{\P_V} \Big(\Big|X(t, x_1, \nu_1) - X(t, x_2, \nu_2)\Big|^{p} \\
  & + \Big|X(t, x_1, \nu_1) - X(t, x_2, \nu_2)\Big|^{p-1} \cdot \Big( \int_{\R^{2d}} \Big|X(t, x_1^\prime, \nu_1) - X(t, x_2^\prime, \nu_2)\Big|^p \pi^0(dx_1^\prime d x_2^\prime)\Big)^{1/p} \Big)
  \ena
  $$
  with the constant $C$ depending only on $V$.
  Finally, we find an upper bound for $I_3$. In fact, an application of the intermediate value theorem to the difference of $\nabla V$ and the inequality \eqref{eq:v2} of Assumption \ref{ass:v} (A-2) yields that
  $$
  \bea
  & I_3 \leq \|K\|_{\infty} \Big|X(t, x_1, \nu_1) - X(t, x_2, \nu_2)\Big|^{p-1} \int_{\R^{2d}}  \Big|X(t, x_1^\prime, \nu_1) - X(t, x_2^\prime, \nu_2)\Big| \\
  & \qquad \times \sup_{\theta\in [0, 1]} \Big|\nabla V^2(\theta X(t, x_1^\prime, \nu_1)  + (1-\theta) X(t, x_2^\prime, \nu_2))\Big| \pi^0(dx_1^\prime dx_2^\prime)\\
  & \leq C_V\|K\|_{\infty}\Big|X(t, x_1, \nu_1) - X(t, x_2, \nu_2)\Big|^{p-1}\\
  &  \qquad \qquad \times \Big(  \int_{\R^{2d}}  \Big|X(t, x_1^\prime, \nu_1) - X(t, x_2^\prime, \nu_2)\Big|^{p}
  \pi^0(dx_1^\prime dx_2^\prime)\Big)^{1/p}  \\
  &  \qquad\qquad \times \Big(   \int_{\R^{2d}} \Big(1 + V(X(t, x_1^\prime,
  \nu_1)) + V(X(t, x_2^\prime, \nu_2))\Big) \pi^0(dx_1^\prime dx_2^\prime) 
  \Big)^{1/q}\\
  & \leq e^{Ct} C_V\|K\|_{\infty} (\|\mu^0_1\|_{\P_V} +\|\mu^0_2\|_{\P_V} )\Big|X(t, x_1, \nu_1) - X(t, x_2, \nu_2)\Big|^{p-1}\\
  &\qquad \qquad \times \Big(  \int_{\R^{2d}} \Big|X(t, x_1^\prime, \nu_1) - X(t, x_2^\prime, \nu_2)\Big|^{p}
  \pi^0(dx_1^\prime dx_2^\prime)\Big)^{1/p}.
  \ena
  $$
  If we define 
  $$
  D_p(\pi)(s) := \Big( \int_{\R^{2d}} \Big|X(s, x_1^\prime, \nu_1) - X(s, x_2^\prime, \nu_2)\Big|^p \pi(dx_1^\prime d x_2^\prime)\Big)^{1/p},
  $$
 then by combing the estimates above, we obtain that for any $t\in [0, T]$,
  $$
\bea
& \phi_\delta\Big(X(t, x_1, \nu_1) - X(t, x_2, \nu_2)\Big) =
\phi_\delta(x_1 - x_2) + 
\int_{0}^t \partial_s \phi_\delta\Big(X(s, x_1, \nu_1) - X(s, x_2, \nu_2)\Big) ds\\
&  \leq \phi_\delta(x_1 - x_2)  +  C(K, V)e^{CT}(\|\nu_1\|_{\P_V} + \|\nu_2\|_{\P_V}) \int_{0}^t \Big(\Big|X(s, x_1, \nu_1) - X(s, x_2, \nu_2)\Big|^{p} 
\\
& +  \Big|X(s, x_1, \nu_1) - X(s, x_2, \nu_2)\Big|^{p-1} \cdot D_p(\pi^0)(s) \Big)ds.
 \ena
  $$
  Now integrating the above inequality with respect to the coupling $\pi^0(dx_1 dx_2)$, using the fact that
  $$
    \int_{\R^{2d}}  \Big|X(s, x_1, \nu_1) - X(s, x_2, \nu_2)\Big|^{p-1}\pi^0(dx_1 dx_2) \leq D^{p-1}_p(\pi^0)(s)
  $$ and finally letting $\delta \gt 0$ yields
  $$
  D^p_p(\pi^0)(t) \leq D^p_p(\pi^0)(0) + C(K, V)e^{CT}(\|\nu_1\|_{\P_V} + \|\nu_2\|_{\P_V}) \int_{0}^t D^p_p(\pi^0)(s)ds.
  $$
  By the Gr\"onwall's inequality we obtain that 
  $$
  D_p^p(\pi^0)(t) \leq D^p_p(\pi^0)(0) \exp\Big( C(K, V)e^{CT}(\|\nu_1\|_{\P_V} + \|\nu_2\|_{\P_V}) t\Big). 
  $$
  Now since $\pi^0\in \Gamma(\nu_1, \nu_2)$ and $\mu_{i,t}  = (X(t, \cdot, \nu_i))_{\#} \nu_i$,  the mapping 
  $$
  \Xi_t  : (x_1, x_2)\in \R^{2d} \mapsto (X(t, x_1, \nu_1), X(t, x_2, \nu_2)) \in \R^{2d} 
  $$
  satisfies that $(\Xi_t)_{\#} \pi^0 \in \Gamma(\mu_{1,t}, \mu_{2,t})$.  As a consequence, we have that 
  $$
  \bea
   \W_p^p(\mu_{1,t}, \mu_{2,t}) & = \inf_{\pi\in  \Gamma(\mu_{1,t}, \mu_{2,t})} \int_{\R^{2d}}|x_1 - x_2|^p  \pi(dx_1 dx_2)\\
  &  \leq \inf_{\pi^0\in  \Gamma(\nu_1, \nu_2)} D_p^p(\pi^0)(t) \\
    &  \leq \text{exp}\Big(C(K, V) e^{CT}(\|\nu_1\|_{\P_V} + \|\nu_2\|_{\P_V})
    t\Big)\cdot \inf_{\pi^0\in  \Gamma(\nu_1, \nu_2)}D^p_p(\pi^0)(0)
\\
& = \text{exp}\Big(C(K, V) e^{CT}(\|\nu_1\|_{\P_V} + \|\nu_2\|_{\P_V}) t\Big)\cdot \W_p^p(\nu_1, \nu_2).
\ena 
  $$
  This finishes the proof in view of \cref{eq:nuiPv}.
\end{proof}

\section{Long time behavior of the solution of the mean field PDE} \label{sec:asymp}

In this section we prove \cref{thm:asymp}. For doing so, we recall following extra assumption on the kernel $K$: 
\[
K = K_{1/2}\ast K_{1/2}  \text{ with } K_{1/2}\in \mathcal{S} \text{ and } \hat{K}_{1/2}(\xi) \neq 0,\ \forall \xi \in \R^d.
\]
A canonical kernel satisfying this condition is a Gaussian kernel. 

\begin{proof}[Proof of  \cref{thm:asymp}]
To prove $\rho_t \wgt \rho_\infty$ as $t\gt \infty$, we only need to 
prove that $\rho_{t_k} \wgt \rho_\infty$ for any sequence $t_k\nearrow \infty$. 
Indeed, suppose that the later is true and that $\rho_t$ does not converge weakly to $\rho_\infty$.
Then there exists a constant $\eps > 0$ and a bounded continuous function $\varphi$,
such that there exists a sequence $t_k\nearrow \infty$ such that 
$$
\Big|\int_{\R^d} \rho_{t_k} \varphi dx  - \int_{\R^d} \rho_{\infty} \varphi dx  \Big| \geq \eps,
$$
which contradicts with the assumption. 
To prove $\rho_{t_k} \wgt \rho_\infty$ for any sequence $t_k\nearrow \infty$, according to \cite[Theorem 2.6]{billingsley2013convergence}, it suffices to show that each subsequence of $\{\rho_{t_k}\}_{k\in\N}$, still denoted by $\{\rho_{t_{k}}\}_{k\in\N}$, has 
a further subsequence $\{\rho_{t_{k_m}}\}_{m\in\N}$  
converging weakly to $\rho_\infty$. Below we divide our proof into three steps.

\noindent {\bf Step 1}: Tightness of $\{\rho_{t_k}\}_{k\in\N}$. In fact, since $\rho_t$ solves \eqref{eq:mfl}, it is straightforward to check that 
 \be\label{eq:diffkl}
 \bea
\partial_t \KL (\rho_t\, ||\, \rho_\infty)&   = -\int_{\R^d} \int_{\R^d}
\rho_t(x) \rho_t(y) \nabla \log\Big(\frac{\rho_t}{\rho_\infty}(x)\Big)
\cdot K(x - y) \cdot \nabla \log\Big(\frac{\rho_t}{\rho_\infty}(y)\Big)dx dy\\
& =   - \int_{\R^d} \int_{\R^d}
(\nabla \rho_t + \nabla V\rho_t)(x)  \cdot K(x - y) \cdot  (\nabla \rho_t + \nabla V\rho_t)(y) dx dy \\
&\leq 0,
\ena
 \en
 where the inequality follows from the fact that $K(x-y)$ is positive definite. 
 Furthermore, noticing that 
 $$ 0 \leq 
-\int_0^t \partial_s 
  \KL(\rho_s\, ||\, \rho_\infty) ds =  \KL(\rho_0 \,||\, \rho_\infty)- \KL(\rho_t \,||\, \rho_\infty) < \infty,
 $$
 one can obtain that $\partial_t
  \KL(\rho_t \,||\, \rho_\infty) \gt 0$ as $t\gt \infty$. 
  As a result of \eqref{eq:diffkl}, we have
  \be\label{eq:diffkl2}
  \int_{\R^d} \int_{\R^d}
(\nabla \rho_t + \nabla V\rho_t)(x)  \cdot K(x - y) \cdot  (\nabla \rho_t + \nabla V\rho_t)(y) dx dy \gt 0 \text{ as } t \gt \infty.
  \en
  Since the relative entropy functional $\rho \mapsto \KL(\rho \,||\, \rho_\infty)$ has compact sub-level sets in the weak topology (see e.g. \cite[Lemma 1.4.3]{dupuis2011weak}), it follows from $\KL(\rho_{t_k} \,||\, \rho_\infty) \leq   \KL(\rho_0 \,||\, \rho_\infty) < \infty$ that $\{\rho_{t_k}\}_{k\in\N}$ is tight. 
 Consequently there exists a subsequence $t_{k_m} \uparrow \infty$ and $\bar{\rho}\in \P(\R^d)$ such that $\KL(\bar{\rho} \,||\, \rho_\infty)< \infty$ and $\rho_{t_{k_m}} \wgt \bar{\rho}$.  
 
\noindent {\bf Step 2:} We show that $\bar{\rho}$ satisfies  $$
  K_{1/2} \ast (\nabla \bar{\rho} + \nabla V \bar{\rho}) = 0
 $$  in the sense of distribution.
  To this end, using Fourier transform and the fact that $\widehat{K} = \widehat{K}_{1/2}^2 $ we can write 
$$
\bea
& \int_{\R^d} \int_{\R^d}
(\nabla \rho_{t_{k_m}} + \nabla V\rho_{t_{k_m}})(x)  \cdot K(x - y) \cdot  (\nabla \rho_{t_{k_m}} + \nabla V\rho_{t_{k_m}})(y) dx dy\\
& =  \int_{\R^d} \widehat{K}(\xi) \Big|\widehat{(\nabla \rho_{t_{k_m}} + \nabla V\rho_{t_{k_m}})}(\xi) \Big|^2 d\xi\\
& = \int_{\R^d}  \Big| \widehat{K}_{1/2}(\xi)\widehat{(\nabla \rho_{t_{k_m}} + \nabla V\rho_{t_{k_m}})}(\xi) \Big|^2 d\xi\\
& = \|K_{1/2} \ast (\nabla \rho_{t_{k_m}} + \nabla V\rho_{t_{k_m}})\|_{L^2(\R^d)}^2.
\ena
$$ 
 Note that we are allowed to take the Fourier transform because $\rho_{t_{k_m}}\in \sY^1_{2, V}$
 by \cref{thm:asymp} and the assumption that $\rho_0 \in \mathscr{Y}^1_{2,V}$. This together with \eqref{eq:diffkl2} implies that 
 $K_{1/2} \ast (\nabla \rho_{t_{k_m}} + \nabla V\rho_{t_{k_m}}) \gt 0$ in $L^2(\R^d)$. 
 On the other hand, using $\rho_{t_{k_m}} \wgt \bar{\rho}$
 with $ \bar{\rho} \in \P(\R^d)$ and integration by parts, one sees that
 $$
 \bea
 K_{1/2} \ast (\nabla \rho_{t_{k_m}} + \nabla V\rho_{t_{k_m}}) & = 
 \int_{\R^d} K_{1/2}(x - y)(\nabla \rho_{t_{k_m}} + \nabla V\rho_{t_{k_m}})(y)dy\\
& = \int_{\R^d} \nabla K_{1/2}(x- y) \rho_{t_{k_m}}(y) + K_{1/2}(x- y)\nabla V(y) \rho_{t_{k_m}}(y)dy\\
& \gt  \int_{\R^d} \nabla K_{1/2}(x- y) \bar{\rho} (y) + K_{1/2}(x- y)\nabla V(y) \bar{\rho} (y)dy.
\ena
 $$
 Therefore we have that $\int_{\R^d} \nabla K_{1/2}(x- y) \bar{\rho} (y) + K_{1/2}(x- y)\nabla V(y) \bar{\rho} (y)dy = 0$ a.e. $x\in \R^d$. This in particular, implies  that  
 \be\label{eq:distribution}
  K_{1/2} \ast (\nabla \bar{\rho} + \nabla V \bar{\rho}) = 0
 \en
 in the sense of distribution. 
 
\noindent {\bf Step 3}: We show that $\bar{\rho} = \rho_\infty$. We first prove that $\nabla \bar{\rho} + \nabla V \bar{\rho} = 0 $ in the sense of tempered 
 distribution. In fact, since $\bar{\rho}\in \mathscr{P}(\R^d)$  and since $V$ grows at most polynomially (due to  \cref{ass:v} (A2)), we know that $(\nabla \bar{\rho} + \nabla V \bar{\rho}) \in \mathcal{S}^\prime$. Since $K_{1/2}\in \mathcal{S}$, it follows from the convolution theorem of Fourier transform (see e.g. \cite[Chapter 4.11, Theorem 3 and Proposition 7]{H66}) that 
 $
   K_{1/2} \ast (\nabla \bar{\rho} + \nabla V \bar{\rho})
 $ can be understood as a rapidly decreasing distribution whose Fourier transform is given by 
 $$
    \widehat{K_{1/2} \ast (\nabla \bar{\rho} + \nabla V \bar{\rho})} = \hat{K}_{1/2} \cdot \widehat{(\nabla \bar{\rho} + \nabla V \bar{\rho})}. 
 $$
By the assumption that $\hat{K}_{1/2} \neq 0$, we have from \eqref{eq:distribution} that $\widehat{\nabla \bar{\rho} + \nabla V \bar{\rho}} = 0$ and hence $\nabla \bar{\rho} + \nabla V \bar{\rho} = 0$. This in addition implies that 
$\nabla (e^{V} \bar{\rho}) = 0$ in the sense of distribution. Therefore $\bar{\rho} = C\rho_\infty$ a.e. for some constant $C$. Finally since both $\bar{\rho}$ and $\rho_\infty$ are probability density, $C =1$ and $\bar{\rho} = \rho_\infty$ a.e.  This finishes the proof. 
\end{proof}

\section*{Acknowledgement}
 The authors would like to thank the anonymous referees for their valuable comments and suggestions to improve the structure and quality of the paper.

\bibliographystyle{siamplain}
\bibliography{steinref}

\begin{thebibliography}{10}

\bibitem{billingsley2013convergence}
{\sc P.~Billingsley}, {\em Convergence of probability measures}, John Wiley \&
  Sons, 2nd~ed., 2013.

\bibitem{bobkov2014one}
{\sc S.~Bobkov and M.~Ledoux}, {\em One-dimensional empirical measures, order
  statistics and {K}antorovich transport distances}, Mem. Amer. Math. Soc.,
  (to appear).

\bibitem{bou2012nonasymptotic}
{\sc N.~Bou-Rabee and M.~Hairer}, {\em Nonasymptotic mixing of the mala
  algorithm}, IMA Journal of Numerical Analysis, 33 (2012), pp.~80--110.

\bibitem{burger2008large}
{\sc M.~Burger and M.~Di~Francesco}, {\em Large time behavior of nonlocal
  aggregation models with nonlinear diffusion}, Networks \& Heterogeneous
  Media, 3 (2008), pp.~749--785.

\bibitem{burger2013stationary}
{\sc M.~Burger, M.~d. Francesco, and M.~Franek}, {\em Stationary states of
  quadratic diffusion equations with long-range attraction}, Communications in
  Mathematical Sciences, 11 (2013), pp.~709--738.

\bibitem{CarilloCraigPatacchini17}
{\sc J.~A. Carillo, K.~Craig, and S.~Patacchini~Francesco}, {\em A blob method
  for diffusion}, preprint, arXiv: 1709.09195,  (2017).

\bibitem{carrillo2003kinetic}
{\sc J.~A. Carrillo, R.~J. McCann, C.~Villani, et~al.}, {\em Kinetic
  equilibration rates for granular media and related equations: entropy
  dissipation and mass transportation estimates}, Revista Matematica
  Iberoamericana, 19 (2003), pp.~971--1018.

\bibitem{chertock2017practical}
{\sc A.~Chertock}, {\em A practical guide to deterministic particle methods},
  in Handbook of Numerical Analysis, vol.~18, Elsevier, 2017, pp.~177--202.

\bibitem{chertock2001particle}
{\sc A.~Chertock and D.~Levy}, {\em Particle methods for dispersive equations},
  Journal of Computational Physics, 171 (2001), pp.~708--730.

\bibitem{craig2016blob}
{\sc K.~Craig and A.~Bertozzi}, {\em A blob method for the aggregation
  equation}, Mathematics of Computation, 85 (2016), pp.~1681--1717.

\bibitem{dalalyan2017theoretical}
{\sc A.~S. Dalalyan}, {\em Theoretical guarantees for approximate sampling from
  smooth and log-concave densities}, Journal of the Royal Statistical Society:
  Series B (Statistical Methodology), 79 (2017), pp.~651--676.

\bibitem{degond1989weighted}
{\sc P.~Degond and S.~Mas-Gallic}, {\em The weighted particle method for
  convection-diffusion equations. i. the case of an isotropic viscosity},
  Mathematics of Computation, 53 (1989), pp.~485--507.

\bibitem{degond1990deterministic}
{\sc P.~Degond and F.-J. Mustieles}, {\em A deterministic approximation of
  diffusion equations using particles}, SIAM Journal on Scientific and
  Statistical Computing, 11 (1990), pp.~293--310.

\bibitem{dobrushin1979vlasov}
{\sc R.~L. Dobrushin}, {\em Vlasov equations}, Functional Analysis and Its
  Applications, 13 (1979), pp.~115--123.

\bibitem{dupuis2011weak}
{\sc P.~Dupuis and R.~S. Ellis}, {\em A weak convergence approach to the theory
  of large deviations}, vol.~902, John Wiley \& Sons, 2011.

\bibitem{durmus2017nonasymptotic}
{\sc A.~Durmus and E.~Moulines}, {\em Nonasymptotic convergence analysis for
  the unadjusted {L}angevin algorithm}, The Annals of Applied Probability, 27
  (2017), pp.~1551--1587.

\bibitem{fournier2015rate}
{\sc N.~Fournier and A.~Guillin}, {\em On the rate of convergence in
  {W}asserstein distance of the empirical measure}, Probability Theory and
  Related Fields, 162 (2015), pp.~707--738.

\bibitem{golse2016dynamics}
{\sc F.~Golse}, {\em On the dynamics of large particle systems in the mean
  field limit}, in Macroscopic and Large Scale Phenomena: Coarse Graining, Mean
  Field Limits and Ergodicity, Springer, 2016, pp.~1--144.

\bibitem{goodman1990convergence}
{\sc J.~Goodman, T.~Y. Hou, and J.~Lowengrub}, {\em Convergence of the point
  vortex method for the 2-{D} {E}uler equations}, Communications on Pure and
  Applied Mathematics, 43 (1990), pp.~415--430.

\bibitem{horstmann20031070}
{\sc D.~Horstmann}, {\em From 1970 until present: the {K}eller-{S}egel model in
  chemotaxis and its consequences}, Jahresber. Dtsch. Math.-Ver., 105 (2003),
  pp.~103--165.

\bibitem{H66}
{\sc J.~Horv\'{a}th}, {\em Topological vector spaces and distributions. {V}ol.
  {I}}, Addison-Wesley Publishing Co., Reading, Mass.-London-Don Mills, Ont.,
  1966.

\bibitem{JKO}
{\sc R.~Jordan, D.~Kinderlehrer, and F.~Otto}, {\em {The Variational
  Formulation of the Fokker--Planck Equation}}, SIAM Journal on Mathematical
  Analysis, 29 (1998), pp.~1--17.

\bibitem{keller1970initiation}
{\sc E.~F. Keller and L.~A. Segel}, {\em Initiation of slime mold aggregation
  viewed as an instability}, Journal of theoretical biology, 26 (1970),
  pp.~399--415.

\bibitem{laurent2007local}
{\sc T.~Laurent}, {\em Local and global existence for an aggregation equation},
  Communications in Partial Differential Equations, 32 (2007), pp.~1941--1964.

\bibitem{LLL18}
{\sc A.~Liu, J.-G. Liu, and Y.~Lu}, {\em On the convergence of empirical
  measures in $\infty$-transportation distance for unbounded densities}, arXiv
  preprint arXiv:1807.08365,  (2018).

\bibitem{Liu17}
{\sc Q.~Liu}, {\em Stein variational gradient descent as gradient flow},
  Advances in Neural Information Processing Systems (NIPS 2017), 30 (2017).

\bibitem{LiuWang16}
{\sc Q.~Liu and D.~Wang}, {\em Stein variational gradient descent: A general
  purpose bayesian inference algorithm}, Advances in Neural Information
  Processing Systems (NIPS 2016), 29 (2016).

\bibitem{markowich2000trend}
{\sc P.~A. Markowich and C.~Villani}, {\em {On the trend to equilibrium for the
  Fokker-Planck equation: an interplay between physics and functional
  analysis}}, Mat. Contemp, 19 (2000), pp.~1--29.

\bibitem{Otto01_porous}
{\sc F.~Otto}, {\em The geometry of dissipative evolution equations: The porous
  medium equation}, Communications in Partial Differential Equations, 26
  (2001), pp.~101--174.

\bibitem{raviart1985analysis}
{\sc P.-A. Raviart}, {\em An analysis of particle methods}, in Numerical
  methods in fluid dynamics, Springer, 1985, pp.~243--324.

\bibitem{roberts1996exponential}
{\sc G.~O. Roberts and R.~Tweedie}, {\em Exponential convergence of {L}angevin
  distributions and their discrete approximations}, Bernoulli, 2 (1996),
  pp.~341--363.

\bibitem{Teschl12}
{\sc G.~Teschl}, {\em Ordinary differential equations and dynamical systems},
  vol.~140 of Graduate Studies in Mathematics, American Mathematical Society,
  Providence, RI, 2012.

\bibitem{trillos2014rate}
{\sc N.~G. Trillos and D.~Slep\'cev}, {\em On the rate of convergence of
  empirical measures in $\infty$-transportation distance}, Canadian Journal of
  Mathematics, 67 (2014), pp.~1358--1383.

\bibitem{Vil03}
{\sc C.~Villani}, {\em Topics in Optimal Transportation}, vol.~58 of Graduate
  Studies in Mathematics, American Mathematical Society, Providence, RI, 2003.

\bibitem{weed2017sharp}
{\sc J.~Weed and F.~Bach}, {\em Sharp asymptotic and finite-sample rates of
  convergence of empirical measures in {W}asserstein distance}, arXiv preprint
  arXiv:1707.00087,  (2017).

\end{thebibliography}

\end{document}